\newtheorem{theorem}{Theorem}[section]
\newtheorem{lem}[theorem]{Lemma}
\numberwithin{equation}{section}
\renewcommand{\pmod}[1]{\allowbreak\mkern7mu({\operator@font mod}\,\,#1)}
\numberwithin{equation}{section}
\begin{document} 
	
	\title{Generalization of the Ford~--~Zaharescu Theorem}
	\author{Elizaveta D. Iudelevich and
		Vitalii V. Iudelevich}
	
	

	\begin{abstract}
	We derive an asymptotic formula for the sum
	$$
	H = \sum_{0<\gamma_k\leqslant T,\, 1\leqslant k\leqslant m}h(a_1\gamma_1+a_2\gamma_2+\cdots + a_m\gamma_m),
	$$
	where $a_1, a_2, \ldots, a_m$ are integers whose sum equals zero, $\gamma_1, \ldots, \gamma_m$ independently run through the imaginary parts of the non-trivial zeros of the Riemann zeta function, each zero occuring in the sum the number of times of its multiplicity, and the function $h$ belongs to some special class.
	\end{abstract}
	
	\date{}
	\maketitle
	
			\section{Introduction}
			Let us define the class of functions $\mathcal{H}_a \subseteq L^1(\mathbb{R})$ (for $a \geqslant 1$) that satisfy the following conditions:
			$$\int\limits_{-\infty}^{+\infty}h(x) dx =  0,\ \ \int\limits_{-\infty}^{+\infty} |x h(x)| dx< +\infty$$
			and $$|\hat{h}'(\xi)|\ll \frac{1}{(|\xi|+1)^{a+1}}$$ for each $\xi\in \mathbb{R}$. 
			
			Here, 
			$$
			\hat{h}(\xi) = \int\limits_{-\infty}^{+\infty} h(x)e^{-2\pi i x \xi} \, dx
			$$
			denotes the Fourier transform of the function $h$. 
			Let $\Lambda(n)$ denote the Mangoldt function,
			$$\Lambda(n) = 
			\begin{cases}
				\log p,\ & \text{if $n = p^m$, where $p$ is prime, and $m\geqslant 1$,}\\
				0, & \text{otherwise.}
			\end{cases}
			$$
			
			In 2015, K. Ford and A. Zaharescu proved the following statement (see \cite{Ford-Zahar2015}).
			\begin{theorem}\label{Thm1} Let $h \in \mathcal{H}_4$, and assume that the point $x = 0$ does not belong to the support of the function $h$. Then, as $T \to +\infty$, we have
				$$\sum_{0<\gamma, \gamma'\leqslant T}h(\gamma-\gamma') = \dfrac{T}{4\pi^2}\int\limits_{-\infty}^{+\infty}h(t)(\mathcal{K}_2(1+it)+\mathcal{K}_2(1-it))dt + O\left( \dfrac{T}{(\log T)^{{1}/{3}}}\right)\!,$$
				where the function $\mathcal{K}_2(s)$ is defined by the series 
				$$\mathcal{K}_2(s) = \sum_{n=1}^{+\infty}\dfrac{\Lambda^2(n)}{n^s}\ \ (\Re s> 1),$$
				and by analytic continuation to the region $\Re s = 1, s \neq 1$; $\gamma$ and $\gamma'$ denotes the imaginary parts of the non-trivial zeros of the Riemann zeta function, taking multiplicity into account, and the implied constant depends only on the function $h$.
			\end{theorem}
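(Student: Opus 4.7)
The strategy is to translate the sum over pairs of zeros into a sum over pairs of prime powers via Fourier analysis and Landau's explicit formula, isolate the diagonal contribution on the prime side, and recognize the resulting series as the integral against $\mathcal{K}_2$.

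Starting from Fourier inversion $h(u)=\int\hat h(\xi)e^{2\pi i u\xi}\,d\xi$, we rewrite
\[
H = \int_{-\infty}^{+\infty}\hat h(\xi)\,|Z_T(\xi)|^2\,d\xi, \qquad Z_T(\xi) = \sum_{0<\gamma\leq T} e^{2\pi i\gamma\xi}.
\]
The assumption $\int h = 0$, equivalently $\hat h(0)=0$, removes the otherwise dominant diagonal contribution $|Z_T(0)|^2 = N(T)^2$, while the hypothesis $0\notin\supp h$ keeps us away from the double pole of $\mathcal{K}_2$ at $s=1$ when we invert the Mellin transform below. Integration by parts converts the hypothesis $|\hat h'(\xi)|\ll (|\xi|+1)^{-5}$ into $|\hat h(\xi)|\ll (|\xi|+1)^{-5}$, so the tail $|\xi|>(\log T)^{A}$ contributes negligibly.

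The analytic heart is a uniform version of Landau's formula (in the spirit of Gonek): for $x=e^{2\pi\xi}>1$,
\[
\sum_{0<\gamma\leq T} x^{i\gamma} = -\frac{T\,\Lambda(x)}{2\pi\sqrt x} + R_T(\xi),
\]
the main term being nonzero only at the prime-power spikes $\xi=\log p^k/(2\pi)$, with a pointwise estimate $|R_T(\xi)|\ll \log T\,\log(2+|\xi|)$ away from the spikes and a Montgomery--Vaughan $L^2$ bound $\int_{-\Xi}^{\Xi}|R_T|^2\,d\xi\ll \Xi\log^2 T$. Expanding $|Z_T|^2 = |\text{main}|^2 + 2\Re(\text{main}\cdot\overline{R_T}) + |R_T|^2$ and integrating against $\hat h(\xi)$, the main$\cdot\overline{\text{main}}$ part, after averaging the product of spikes against $\hat h$, collapses to the prime-power diagonal $m=n$ and produces
\[
\frac{T}{4\pi^2}\sum_n\frac{\Lambda^2(n)}{n}\Big(\hat h\big(\tfrac{\log n}{2\pi}\big)+\hat h\big(-\tfrac{\log n}{2\pi}\big)\Big).
\]
Writing $\hat h(\pm \log n/(2\pi))=\int h(t)n^{\mp it}\,dt$ and swapping sum and integral (legitimate since $h$ vanishes on a neighbourhood of $0$, hence far from the singularity of $\mathcal{K}_2$ at $s=1$), this equals $\frac{T}{4\pi^2}\int h(t)\big(\mathcal{K}_2(1+it)+\mathcal{K}_2(1-it)\big)dt$, which is the announced main term.

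The \emph{principal obstacle} is the $|R_T|^2$ integral. A pointwise use of $|R_T|\ll\log T$ yields only $O(T\log^2 T)$, which is hopeless; one must invoke the $L^2$ bound above and split $\xi$ at a cutoff $\Xi\asymp(\log T)^c$, then use the decay of $\hat h$ to handle $|\xi|>\Xi$. Balancing the near- and far-from-spike contributions after optimizing $c$ is precisely what produces the saving $O(T/(\log T)^{1/3})$; the decay exponent $a=4$ in the class $\mathcal{H}_4$ is tight for this balancing. The cross terms $\text{main}\cdot\overline{R_T}$ are easier, being handled by partial summation together with the decay of $\hat h$ and the classical prime number theorem error term. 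The multiplicity of zeros is absorbed automatically into the counting of $Z_T$ and plays no further role.
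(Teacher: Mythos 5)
Your outline has the same skeleton as the Ford--Zaharescu argument (which this paper cites rather than reproves, but whose machinery it reproduces in general form): Fourier inversion, a Landau--Gonek-type explicit formula producing prime-power spikes, the spike-times-spike term collapsing to $\frac{T}{4\pi^2}\sum_n\Lambda^2(n)n^{-1}\hat h\bigl(\pm\frac{\log n}{2\pi}\bigr)$ and hence to $\mathcal{K}_2(1\pm it)$, and a cutoff in $\xi$ together with the decay of $\hat h$. The main-term computation and the role of $0\notin\supp h$ are fine. The gap is in your treatment of $R_T$, and it is not a technicality: the bounds you assume for it are unavailable, and one of them is false. First, near $\xi=0$ the sum $Z_T(\xi)$ has size $N(T)\asymp T\log T$ on an interval of length $\asymp 1/T$ while your main term vanishes there, so $\int_{-\Xi}^{\Xi}|R_T|^2\,d\xi\gg T\log^2T$, not $\ll\Xi\log^2T$; this region cannot be "removed" by the pointwise vanishing $\hat h(0)=0$ at a single point, but must be tamed by the bound $|\hat h(\xi)|\ll|\xi|$ (a consequence of $\int h=0$ together with $\int|xh(x)|\,dx<\infty$), which is exactly how the paper handles the term $\min\bigl(\frac{\log T}{\log x},T\log T\bigr)$ inside $E(\xi)$.

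Second, and more fundamentally, your formula is stated for $\sum_{0<\gamma\le T}x^{i\gamma}$, whereas Landau--Gonek gives $\sum_{0<\gamma\le T}x^{\rho}$. Unconditionally the discrepancy is $D(\xi)=\sum_{0<\gamma\le T}e^{2\pi i\xi\gamma}\bigl(1-e^{2\pi\xi(\beta-\frac12)}\bigr)$, the contribution of zeros off the critical line, and controlling it is the actual crux of the theorem. Montgomery--Vaughan concerns Dirichlet polynomials and gives no bound for such sums over zeros; the only known handle is a zero-density estimate. The paper's Lemma~\ref{L6} (used with exponent $2$, i.e.\ the case relevant to Theorem~\ref{Thm1}) proves, via Selberg's bound $N(\sigma,T)\ll T^{1-\frac14(\sigma-\frac12)}\log T$, only that $\int_0^U|D(\xi)|^2\,d\xi\ll T\bigl(U^4(\log T)^{-1}+(\log T)^3T^{-1/(4U)}\bigr)$ --- note the factor $T$, so the $L^2$ norm of the error is nowhere near absolutely small, and the final saving of a power of $\log T$ is extracted not from smallness of $\int|R_T|^2$ but from the decomposition $Q=M+E+D$, the weight $|\hat h|$, and a H\"older/Cauchy--Schwarz step pairing $D$ and $E$ against the $M$-factors, with $U\asymp\log T/\log\log T$. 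Without this zero-density input (or RH) your error analysis, which presumes $\int_{-\Xi}^{\Xi}|R_T|^2\,d\xi\ll\Xi\log^2T$, does not go through; with the correct, much weaker bound, the "balancing" you describe has to be reorganized along these lines, which is precisely what produces the exponent $1/3$ in the error term.
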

			
			
			In \cite{Iud2023}, the first author obtained an asymptotic formula for the sum
			$$
			H = \sum_{0<\gamma_k\leqslant T,\, 1\leqslant k\leqslant 4} h(\gamma_1+\gamma_2-\gamma_3-\gamma_4).
			$$
			Specifically, the following theorem was proven.
			
			\begin{theorem}
				Let $h \in \mathcal{H}_6,$
				and let the function $\mathcal{K}_4(s)$ be defined by the series
				\begin{equation*}
					\mathcal{K}_4(s) = \sum_{n=1}^{+\infty} \frac{\Lambda^4(n)}{n^s} \quad (\Re s > 1).
				\end{equation*}
				Then as $T \to +\infty,$ we have
				$$
				H = \frac{T^3}{24\pi^4} \int\limits_{-\infty}^{+\infty} h(t) \left(\mathcal{K}_4(2+it)+\mathcal{K}_4(2-it)\right) dt + O\left(\frac{T^3}{(\log T)^{1/4}}\right)\!,
				$$
				where the implied constant depends only on the function $h.$
			\end{theorem}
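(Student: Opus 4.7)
The plan is to adapt the Fourier-analytic strategy underlying Theorem~\ref{Thm1} to four summation variables simultaneously. The starting point is Fourier inversion, $h(y)=\int\hat{h}(\xi)\,e^{2\pi i\xi y}\,d\xi$, which together with Fubini and the factorisation $e^{2\pi i\xi(\gamma_{1}+\gamma_{2}-\gamma_{3}-\gamma_{4})} = \prod_{j}e^{\pm 2\pi i\xi\gamma_{j}}$ converts $H$ into the one-dimensional integral
$$
H = \int_{-\infty}^{+\infty}\hat{h}(\xi)\,S(\xi)^{2}\,\overline{S(\xi)}^{\,2}\,d\xi, \qquad S(\xi) := \sum_{0<\gamma\le T}e^{2\pi i\xi\gamma}.
$$
The original four summation variables are thereby packaged into the weighted fourth moment of the trigonometric polynomial $S$ against the Fourier transform $\hat{h}$.

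Next, I would substitute the uniform Landau--Gonek formula for $S(\xi)$: for $x=e^{2\pi|\xi|}>1$ in a suitable range,
$$
S(\xi) = -\frac{T}{2\pi\sqrt{x}}\,\Lambda(x) + \mathcal{E}(\xi,T),
$$
with the complex-conjugate analogue holding for $\overline{S(\xi)}$ and with $\mathcal{E}(\xi,T)$ controlled on average via mean-value bounds of Gonek and Fujii. Expanding $S^{2}\overline{S}^{\,2}$ into sixteen terms, the ``all-main'' piece localises on the prime-power frequencies $\xi_{n}=\log n/(2\pi)$. Each such peak has height of order $T\Lambda(n)/(2\pi\sqrt{n})$ and effective width of order $1/T$ (the natural Fej\'er-type resolution of a sum truncated at $\gamma\le T$), so that integrating $|S|^{4}$ across one peak contributes an amount proportional to $T^{3}\Lambda^{4}(n)/n^{2}$, not $T^{4}\Lambda^{4}(n)/n^{2}$. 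Summing over prime powers and using the Mellin-type identity
$$
\int h(t)\bigl(\mathcal{K}_{4}(2+it)+\mathcal{K}_{4}(2-it)\bigr)dt = \sum_{n}\frac{\Lambda^{4}(n)}{n^{2}}\bigl(\hat{h}(\xi_{n})+\hat{h}(-\xi_{n})\bigr)
$$
then reproduces the stated main term; the numerical constant $1/(24\pi^{4})$ emerges from the shape factor $\int(\sin(\pi u)/(\pi u))^{4}\,du = 2/3$ of the peaks.

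The central obstacle is the control of the fifteen mixed terms, in which some of the four copies of $S$ are replaced by the Landau main term and the rest by the remainder $\mathcal{E}$. To handle these I would decompose the $\xi$-axis dyadically, apply the uniform Landau--Gonek formula on each dyadic range, and invoke fourth-power mean-value estimates for $\sum_{\gamma\le T}x^{i\gamma}$. The strengthened decay hypothesis $|\hat{h}'(\xi)|\ll(|\xi|+1)^{-7}$ afforded by $h\in\mathcal{H}_{6}$ is essential here: two integrations by parts reduce the contribution from large $|\xi|$ to acceptable size, and this is precisely why $\mathcal{H}_{6}$ must replace the weaker class $\mathcal{H}_{4}$ used in Theorem~\ref{Thm1}. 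Optimising the dyadic truncation parameter, exactly as in the final step of the Ford--Zaharescu argument, should then yield the error term $O(T^{3}/(\log T)^{1/4})$ in the stated form.
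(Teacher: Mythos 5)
Your overall Fourier strategy, the main-term heuristic, and the constant check are consistent with what is actually done (the statement is the $m=4$, $\mathbf{a}=(1,1,-1,-1)$ case of Theorem \ref{Th1}: the shape factor $\int_{-\infty}^{+\infty}(\sin w/w)^4\,dw=2\pi/3$ indeed yields $1/(24\pi^4)$), and your explanation of why $\mathcal{H}_6$ is needed to truncate the $\xi$-integral is in the right spirit. The genuine gap is in the step where you write $S(\xi)=-\frac{T}{2\pi\sqrt{x}}\Lambda(x)+\mathcal{E}(\xi,T)$ and propose to control $\mathcal{E}$ ``on average via mean-value bounds of Gonek and Fujii.'' The Landau--Gonek formula applies to $\sum_{0<\gamma\leqslant T}x^{\rho}$, not to $\sum_{0<\gamma\leqslant T}x^{i\gamma}$; unconditionally the two differ by $D(\xi)=\sum_{0<\gamma\leqslant T}e^{2\pi i\xi\gamma}\left(1-e^{2\pi\xi(\beta-\frac12)}\right)$, the contribution of hypothetical zeros off the critical line, and no standard mean-value theorem bounds it. The technical heart of the proof is precisely a high-moment estimate for this term, $\int_0^U|D(\xi)|^{2m}\,d\xi\ll T^{2m-1}\bigl(U^{4m}(\log T)^{-(2m-1)}+(\log T)^{2m+1}T^{-1/(4U)}\bigr)$ (Lemma \ref{L6}; an analogous lemma underlies the $m=4$ case in \cite{Iud2023}), which requires Selberg's zero-density bound $N(\sigma,T)\ll T^{1-\frac14(\sigma-\frac12)}\log T$ together with a cancellation argument (Lemmas \ref{L1}--\ref{L3}) showing that the first $2m$ Taylor coefficients of the integrated kernel vanish; without that cancellation the bound loses many powers of $\log T$ and the method gives nothing. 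Your sketch never confronts this term, and as written it implicitly treats the zeros as if they were on the critical line.

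A secondary point: the exponent $1/4$ in the error does not come from ``optimising a dyadic truncation.'' In the actual argument one truncates at $U\asymp\log T/\log\log T$ using $\hat h(\xi)\ll\xi^{-6}$ and the first-moment bound $\sum\min(K,1/|\Delta|)\ll T^{3}(\log T)^{4}(K+\log T)$ (Lemma \ref{L5}), decomposes $Q=M+E+D$, and applies the generalized H\"older inequality to each of the fifteen mixed products; since $\int_0^U|\hat h|\,|D|^4\,d\xi\ll T^{3}/\log T$, $\int_0^U|\hat h|\,|E|^4\,d\xi\ll T^{2}(\log T)^4$ and $\int_0^U|\hat h|\,|M|^4\,d\xi\ll T^{3}$, each mixed term is $\ll T^{9/4}\,(T^{3}/\log T)^{1/4}=T^{3}(\log T)^{-1/4}$, which is where the stated error originates. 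Likewise your height-times-width peak heuristic must be replaced by the explicit evaluation $\int_{-\infty}^{+\infty}\bigl|\tfrac{e^{iTu}-1}{iu}\bigr|^{4}du=\tfrac{4\pi}{3}T^{3}$ together with a diagonalisation step showing off-peak frequencies contribute only $O(T^{5/2+\varepsilon})$; that part is routine for the tuple $(1,1,-1,-1)$, but the $D$-term gap above is essential and must be filled before even the qualitative asymptotic, let alone the $O(T^{3}(\log T)^{-1/4})$ error term, is justified.
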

			
			This paper is devoted to generalizing these results to arbitrary linear combinations of ordinates of zeros. We prove the following
			
			\begin{theorem}\label{Th1}
				Let $m \geqslant 3,$ and let $\textbf{a} = (a_1, a_2, \ldots, a_m)$ be a fixed tuple of integers (not necessarily distinct) such that any two distinct numbers in the tuple are coprime, and 
				$$a_1+a_2+\cdots + a_m = 0.$$
				For $h \in \mathcal{H}_{m+2},$ define
				$$
				H = H(T; h, \textbf{a}) = \sum_{\substack{1\leqslant k\leqslant m \\ 0<\gamma_k\leqslant T}} h(a_1\gamma_1+a_2\gamma_2+\cdots+a_m\gamma_m),
				$$
				where $\gamma_1, \gamma_2,\ldots, \gamma_m,$ independently of each other, run through the ordinates of non-trivial zeros of the Riemann zeta function, taking multiplicity into account.
				
				Let the function $\mathcal{K}_m(s)$ be defined by the series
				$$
				\mathcal{K}_m(s) = \sum_{n=1}^{+\infty} \frac{\Lambda^m(n)}{n^s} \quad (\Re s > 1).
				$$
				
				Then as $T \to +\infty,$ the following asymptotic formula holds:
				$$
				H = D(m) T^{m-1} \int\limits_{-\infty}^{+\infty} h(t) (\mathcal{K}_m(S+it)+ \mathcal{K}_m(S-it)) dt + O\left(\frac{T^{m-1}}{(\log T)^{\frac{1}{m}}}\right)\!,
				$$
				where $S$ is the sum of the positive elements in $\textbf{a},$
				$$
				D(m) = \frac{(-1)^m C(m)}{(2\pi)^m},
				$$
				and
				$$
				C(m) = C(m; \textbf{a}) = \frac{1}{\pi} \int\limits_{-\infty}^{+\infty} \prod_{k=1}^m \left(\frac{\sin(|a_k|w)}{|a_k|w}\right) dw.
				$$
				
				The implied constant depends only on the function $h,$ $m,$ and $\textbf{a}.$
			\end{theorem}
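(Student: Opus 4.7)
\noindent\emph{Proof plan.} The natural starting point is Fourier inversion. Writing
$$
h(a_1\g_1+\cdots+a_m\g_m) = \int_{-\infty}^{+\infty} \hat h(\xi)\, e^{2\pi i \xi(a_1\g_1+\cdots+a_m\g_m)}\, d\xi
$$
and exchanging summation with integration (justified by the decay of $\hat h$ that follows from $h \in \mathcal{H}_{m+2}$), the problem reduces to
$$
H = \int_{-\infty}^{+\infty} \hat h(\xi)\, \prod_{k=1}^m F(a_k\xi)\, d\xi, \qquad F(\a) := \sum_{0<\g\leq T} e^{2\pi i \a\g}.
$$
The bound $|\hat h(\xi)| \ll (|\xi|+1)^{-m-2}$ permits truncating to $|\xi|\leq \Xi(T)$ with negligible loss, for a parameter $\Xi(T)$ to be optimized later.

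\medskip

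The next step is to invoke a Landau--Gonek-type explicit formula for $F(\a)$. Via the classical asymptotic
$$
\sum_{0<\g\leq T} x^{i\g} = -\frac{T\L(x)}{2\pi\sqrt x} + O\!\left(\frac{\log T}{\sqrt x}\right) \qquad (x = e^{2\pi\a} > 1),
$$
the function $F(\a)$ is small outside the ``resonances'' $\a = \pm \log n/(2\pi)$ (with $n$ a prime power), while near each resonance it develops a sinc-shaped peak of amplitude $\sim T\L(n)/\sqrt n$ and width $\sim 1/T$. Substituting this description into $\prod_k F(a_k\xi)$ and expanding produces a main contribution indexed by $m$-tuples of prime powers $(n_1,\ldots,n_m)$ and signs $\e_k\in\{\pm1\}$, corresponding to the simultaneous alignment condition $a_k\xi \approx \e_k \log n_k/(2\pi)$. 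The pairwise coprimality of the distinct values in $\textbf{a}$, combined with $\sum_k a_k = 0$, forces all $n_k$ to be powers of a single prime $p$: writing $n_k = p^{e_k}$ yields $\e_k = \operatorname{sign}(a_k)$ and $e_k = |a_k| d$ for a common positive integer $d$, with joint resonance at $\xi_0 = \pm d\log p/(2\pi)$.

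\medskip

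Computing the local integral around each such $\xi_0$ via the change of variables $\eta = T(\xi - \xi_0)$ produces a factor $C(m)$ from the product of sinc profiles, an amplitude $\prod_k \L(n_k)/\sqrt{n_k} = (\log p)^m/p^{dS}$ (using $\sum_k |a_k| = 2S$, which is a consequence of $\sum_k a_k = 0$), and a value $\hat h(\xi_0)$ of the test function. Summation over $p$ and $d \geq 1$ then collapses the $m$-fold product of $\L$-sums into the single Dirichlet series $\mathcal{K}_m(s)$ at $s = S\pm it$, via the Mellin--Fourier identity $\hat h(\pm d\log p/(2\pi)) = \int h(t)\, p^{\mp itd}\, dt$ and Fubini; this yields exactly the stated main term $T^{m-1}D(m)\int h(t)(\mathcal{K}_m(S+it)+\mathcal{K}_m(S-it))\, dt$.

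\medskip

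The principal difficulty is uniform error control: the Landau--Gonek approximation must be uniform in $\a$ and $T$; cross-terms in the product expansion, where only a proper subset of the factors $F(a_k\xi)$ are at resonance, must be bounded (here the coprimality assumption rules out spurious secondary diagonals involving more than one prime); and the stronger hypothesis $|\hat h'(\xi)|\ll (|\xi|+1)^{-m-3}$ is needed to replace $\hat h(\xi)$ by $\hat h(\xi_0)$ within each peak region with quantitative control on the discrepancy. Balancing $\Xi(T)$ against the aggregate off-diagonal contribution---each of the $m$ peak-factors introducing an unavoidable logarithmic loss---should yield the quoted error term $T^{m-1}/(\log T)^{1/m}$, the exponent $1/m$ reflecting the optimal trade-off in the $m$-fold peak structure.
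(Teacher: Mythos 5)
Your outline follows the same general route as the paper (Fourier inversion, truncation of the $\xi$-range, Landau--Gonek resonance analysis of $\prod_k F(a_k\xi)$, the coprimality-driven diagonal $n_k=n^{|a_k|}$, the sinc-product integral producing $C(m)$, and resummation into $\mathcal{K}_m(S\pm it)$), but there is a genuine gap at its foundation: you apply a Landau--Gonek asymptotic directly to $F(\alpha)=\sum_{0<\gamma\leq T}e^{2\pi i\alpha\gamma}=\sum_{0<\gamma\leq T}x^{i\gamma}$ (with $x=e^{2\pi\alpha}$), whereas the explicit formula controls $\sum_{0<\gamma\leq T}x^{\rho}$ with $\rho=\beta+i\gamma$. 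Identifying the two, i.e.\ replacing $x^{i\gamma}$ by $x^{\rho-1/2}$, is legitimate only under the Riemann Hypothesis. The theorem is unconditional, and the entire technical heart of the paper is precisely the control of the discrepancy
$$
D(\xi)=\sum_{0<\gamma\leq T}e^{2\pi i\xi\gamma}\left(1-e^{2\pi\xi(\beta-\frac12)}\right),
$$
namely the moment bound $\int_0^U|D(\xi)|^{2m}\,d\xi\ll T^{2m-1}\bigl(U^{4m}(\log T)^{-(2m-1)}+(\log T)^{2m+1}T^{-1/(4U)}\bigr)$ proved via Selberg's zero-density estimate, together with combinatorial identities (the paper's Lemmas 1--3) needed to cancel the low-order terms in the expansion of $\prod_k\bigl(2-2\cosh(2\pi\xi(\beta_k-\tfrac12))\bigr)$ and thereby gain the factor $\alpha^{4m}$. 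Your plan never mentions off-critical-line zeros, so as written it either silently assumes RH or leaves the dominant error source untreated; ``uniformity of Landau--Gonek in $\alpha$ and $T$'' does not cover this issue.

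Relatedly, your explanation of the exponent $1/m$ (``optimal trade-off in the $m$-fold peak structure'', balancing against a truncation parameter $\Xi(T)$) does not match the actual mechanism and would not by itself produce the stated error term. In the paper one writes $Q=M+E+D$, and every cross term containing at least one factor $E$ or $D$ is estimated by the generalized H\"older inequality: the $M$-factors give $\int|M|^m\ll T^{m-1}$, while the $D$-factor gives $\int|\hat h|\,|D|^m\ll T^{m-1}/\log T$ (this is where the moment bound above and the choice $U\asymp\log T/\log\log T$ enter), so each such term is $\ll T^{(m-1)(m-1)/m}\bigl(T^{m-1}/\log T\bigr)^{1/m}=T^{m-1}(\log T)^{-1/m}$. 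Without the $D$-moment estimate there is no source for this logarithmic saving, and no choice of $\Xi(T)$ in your scheme replaces it. The resonance/diagonal part of your plan is essentially right (and the coprimality argument you invoke is indeed what rules out spurious alignments, via the paper's observation that $N_1^{d_\nu}\neq N_\nu^{d_1}$ unless $N_\nu=n^{d_\nu}$), but the unconditional error analysis is missing and is not a routine matter to fill in.
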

			
			The simplest tuple satisfying Theorem \ref{Th1} has the form
			$$a_k = \begin{cases}
				+1,\ \  \text{if}\ 1\leqslant k \leqslant r, \\
				-1,\ \ \text{if}\  r< k \leqslant m,
			\end{cases}$$
			where $m = 2r$ and $r \geqslant 2.$ From Theorem \ref{Th1} and Lemma \ref{L4} of this paper, for $h \in \mathcal{H}_{2r+2},$ it follows that
			\begin{equation*}
				H (T; h, \textbf{\textit{a}})=
				c(r) T^{2r-1} \int\limits_{-\infty}^{+\infty}h(t)(\mathcal{K}_{2r}(r+it)+\mathcal{K}_{2r}(r-it))dt + O\left(\dfrac{T^{2r-1}}{(\log T)^{\frac{1}{2r}}} \right),
			\end{equation*}
			where 
			$$c(r) = \frac{r}{(2\pi)^{2r}}\sum_{k=1}^r k^{2r-3}\prod\limits_{\substack{1\leqslant \ell\leqslant r \\ \ell\neq k}}\dfrac{1}{k^2-\ell^2},$$
			and the implied constant depends only on $h$ and $r$.
			Below, we provide the values of $c(r)$ for small $r.$
			\begin{table}[h!]
				\centering
				\begin{tabular}{|c|c|c|}
					\hline
					$r$ & $c(r)\pi^{2r}$  & value \\
					\hline
					1 & ${1}/{4}$ & $0{.}25$ \\
					2 & ${1}/{24}$ & $0{.}0416666\ldots$ \\
					3 & ${11}/{1\,280}$ & $0.0085937\ldots$ \\
					4 & ${151}/{80\,640}$ & $0{.}0018725\ldots$ \\
					5 & ${15\,619}/{37\,158\,912}$ & $0{.}0004203\ldots$ \\
					\hline
				\end{tabular}
			\end{table}
			
			Interest in the sums above is explained by an effect discovered by R. P. Marco \cite{Marco-Perez2011}, which has been termed the \textit{repulse phenomenon}. When forming pairwise differences of the ordinates of the zeros of the Riemann zeta function, it can be observed that these differences very rarely take values close to the ordinates of the zeros of the Riemann zeta function. In other words, zeros with large ordinates "know" about zeros with small ordinates and, in a certain sense, "avoid" them.
			
			An illustration of this effect can be seen in the graph of the function $\mathcal{K}_2(1+it)+\mathcal{K}_2(1-it)$ (see \cite{Ford-Zahar2015}), which exhibits "dips" at the ordinates of the zeros of the Riemann zeta function. Similarly, graphs of functions $y(t) = y(t; \textbf{\textit{a}}) = \mathcal{K}_m(S+it)+\mathcal{K}_m(S-it)$ also show dips at the ordinates of the zeros of the Riemann zeta function, particularly noticeable for small values of $S$. Below are graphs for 
			\begin{equation}\label{aaa}
				\textbf{\textit{a}} \in \left\{(1, 1, -2),\ (1, 1, -1, -1),\ (1, 2, -3)\right\},
			\end{equation}
			depicted in red, blue, and green colors respectively.
			
			\begin{figure}[h!]
				\caption{The graphs of the functions $y(t; \textbf{a})$ and the local minima at the points $\gamma_1 \approx 14.134, \gamma_2 \approx 21.022, \gamma_3 \approx 25.01, \gamma_4 \approx 30.424, \gamma_5 \approx 32.935, \gamma_6 \approx 37.586$.}
				\centering{\includegraphics[scale=0.9]{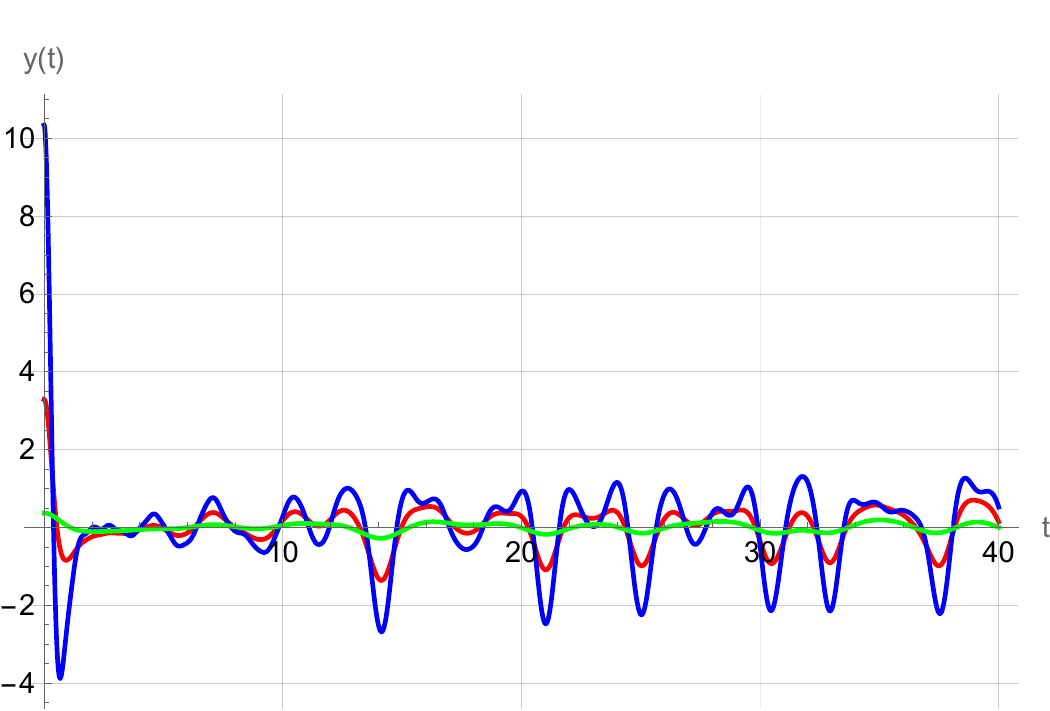}}
				\label{fig:image1}
			\end{figure}
			
			Following \cite{Ford-Zahar2015}, the presence of dips can be explained as follows. Let 
			$$
			b_m(k) = \sum_{\delta|k} \mu(\delta)\delta^{m-1}.
			$$
			Then, using the identity 
			$$
			\sum_{d\delta = k} d^{m-1} b_m(\delta) = 1,
			$$
			we find
			$$
			\mathcal{K}_m(s) = \sum_{\delta = 1}^{+\infty} b_m(\delta) G_m(\delta s),
			$$
			where
			$$
			G_m(s) = \sum_{n = 1}^{+\infty} \frac{\Lambda(n)(\log n)^{m-1}}{n^s} = (-1)^m \left(\frac{\zeta'(s)}{\zeta(s)}\right)^{(m-1)}.
			$$
			
			Using the estimates $|b_m(\delta)| \leqslant \delta^{m-1}$ and $G_m(\sigma+it) \ll_m 2^{-\sigma}$ for $\sigma > 1,$ as well as the identity (see \cite[chapter~I, \S 6, Theorem~2]{Voronin-Karatsuba})
			$$
			\frac{\zeta'(s)}{\zeta(s)} = -\frac{1}{s-1} + \sum_{\rho} \left(\frac{1}{s-\rho} + \frac{1}{\rho}\right) + c,
			$$
			where $c$ is a constant and $\rho$ runs through all zeros of the Riemann zeta function (including trivial ones), we obtain for $M \geqslant 2:$
			$$\mathcal{K}_m(s) = (m-1)!\sum_{\delta\leqslant M}\dfrac{b_m(\delta)}{\delta^m} \left(\dfrac{1}{(s-\frac{1}{\delta})^m}-\sum_{\rho}\dfrac{1}{(s-\frac{\rho}{\delta})^m}\right) + O_m\left(M^{m-1}2^{-M\sigma}\right).$$
			If $s = S + it$, the largest (in absolute value) negative term in the sum above occurs when $m = 1$, $t \approx \Im \rho_0$, and $\rho = \rho_0$, where $\rho_0$ is a non-trivial zero of the Riemann zeta function. In this case, the term is approximately equal to 
			$$
			-\frac{(m-1)!}{(S-\frac{1}{2})^m}.
			$$
			
			Thus, by ignoring contributions for $m \geqslant 2$ and $\rho \neq \rho_0$, we obtain
			$$
			y(t; \textbf{\textit{a}}) \approx -\frac{2(m-1)!}{(S-\frac{1}{2})^m}
			$$
			for $t \approx \Im \rho_0.$ Consequently, near the ordinates of the zeros of the Riemann zeta function, the function $y(t; \textbf{\textit{a}})$ will have local minima and take values close to 
			$$
			-2\frac{(m-1)!}{(S-\frac{1}{2})^m}.
			$$
			In particular, for $\textbf{\textit{a}}$ from \eqref{aaa}, these values are approximately equal to $-1.18, -2.37, -0.26,$ as can be observed in the graph above.
			
			In this paper, we follow the techniques from \cite{Ford-Zahar2015}, while also introducing additional arguments such as Lemmas~\ref{L1} and~\ref{L2}, as well as considerations related to simultaneous approximation of quantities $x$ and $x^d$ by integers for real $x$ and integer $d \geqslant 2.$
			
			The following notations are adopted in this work.
			
			The notation $f(x) \ll g(x)$ for $g(x) > 0$ means that there exists a constant $c > 0$ such that $|f(x)| \leqslant c g(x).$
			
			$N(T)$ denotes the number of zeros (counting multiplicities) of the Riemann zeta function in the region $0 < \Re s < 1,\ 0 < \Im s \leqslant T.$ The following equality holds (see \cite[chapter~1, \S8]{Voronin-Karatsuba}):
			\begin{equation*}
				N(T) = \frac{T}{2\pi} \log{\frac{T}{2\pi}} - \frac{T}{2\pi} + O(\log T).
			\end{equation*}
			
			$N(\sigma,T)$ denotes the number of zeros of the Riemann zeta function in the region 
			$$
			\sigma \leqslant \Re s \leqslant 1,\quad 0 < \Im s \leqslant T.
			$$
			As is known (see~\cite{Selberg1946}), uniformly for $\sigma \geqslant 1/2,\ T \geqslant 2,$ the following estimate 
			\begin{equation*}
				N(\sigma,T) \ll T^{1-\frac{1}{4}(\sigma-\frac{1}{2})} \log T
			\end{equation*}
			holds.
			For $x \in \mathbb{R}$, the symbol $n(x)$ denotes the nearest integer to $x.$ Specifically, we write $n(x)= n,$ if $x \in (n-\frac{1}{2}, n+\frac{1}{2}),$ and $n(x)= n+1,$ if $x = n+\frac{1}{2},$ where $n \in {\mathbb Z}.$

			
			
			
			\section{Auxiliary Results}
			We will need the following lemmas, which are of independent interest.
			\begin{lem}\label{L1}
				Let $x_1, x_2, \ldots, x_q \in \mathbb{C}$ and $1 \leqslant r < q$. Then we have
				\begin{equation}\label{ident}
					\sum\limits_{s=1}^q (-1)^{s-1} \sum\limits_{1 \leqslant k_1 < \cdots < k_s \leqslant q} \sum\limits_{\substack{j_{k_1}, \ldots,j_{k_s} \geqslant 0: \\ j_{k_1} + \cdots + j_{k_s}=r}} \binom{2r}{2j_{k_1}, \ldots,  2j_{k_s}} x_{k_1}^{j_{k_1}} \cdots x_{k_s}^{j_{k_s}} = 0, 
				\end{equation}
				where
				$$\binom{2r}{2j_{k_1}, \ldots,  2j_{k_s}}  = \dfrac{(2r)!}{(2j_{k_1})!  \cdots (2j_{k_s})!}.$$
			\end{lem}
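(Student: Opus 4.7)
The plan is to recognise the left-hand side of \eqref{ident} as an inclusion-exclusion acting on the coordinate-subspace restrictions of a single polynomial in $q$ variables. Specifically, I will introduce
$$Q(x_1, \ldots, x_q) = \sum_{\substack{j_1, \ldots, j_q \geqslant 0 \\ j_1 + \cdots + j_q = r}} \binom{2r}{2j_1, \ldots, 2j_q} x_1^{j_1} \cdots x_q^{j_q},$$
which is homogeneous of total degree $r$. For each subset $T = \{k_1, \ldots, k_s\} \subseteq \{1, \ldots, q\}$, let $Q_T$ denote the restriction of $Q$ obtained by setting $x_i = 0$ for $i \notin T$. Since multinomial coefficients with zero entries collapse, $\binom{2r}{\ldots,\,0,\,\ldots} = \binom{2r}{\ldots}$, the innermost double sum of \eqref{ident} is exactly $Q_T(x)$. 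Consequently, the entire left-hand side equals $\sum_{\emptyset \neq T \subseteq \{1, \ldots, q\}} (-1)^{|T|-1} Q_T(x)$.

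Next I will expand $Q = \sum_m c_m x^m$ as a sum over monomials $x^m = x_1^{m_1}\cdots x_q^{m_q}$ and read off the coefficient of each $x^m$ in this expression. Writing $\supp(m) = \{i : m_i > 0\}$, the monomial $x^m$ contributes to $Q_T$ exactly when $T \supseteq \supp(m)$, so its total coefficient is $c_m \sum_{T \supseteq \supp(m)} (-1)^{|T|-1}$; the side condition $T \neq \emptyset$ is either automatic (when $\supp(m) \neq \emptyset$) or can be dropped at the cost of $Q(0)$, which vanishes since $r \geqslant 1$ means $Q$ has no constant term. Parametrising $T = \supp(m) \cup U$ with $U \subseteq \{1, \ldots, q\} \setminus \supp(m)$ and invoking the standard cancellation $\sum_{U \subseteq A}(-1)^{|U|} = 0$ for any nonempty $A$, one sees that this coefficient vanishes unless $\supp(m) = \{1, \ldots, q\}$.

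Finally, a monomial of $Q$ with full support would require $m_i \geqslant 1$ for every $i$, hence $\sum_i m_i \geqslant q$; but every monomial of $Q$ has degree exactly $r$, and the hypothesis $r < q$ rules this out. Therefore every coefficient on the left-hand side of \eqref{ident} is zero, proving the identity. The argument is not genuinely difficult; the only real point is spotting the polynomial $Q$ that turns the alternating sum over $T$ into a transparent inclusion-exclusion, after which the degree hypothesis $r < q$ closes the argument automatically.
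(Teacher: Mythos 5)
Your proof is correct and is essentially the paper's argument in different packaging: introducing $Q$ and its restrictions $Q_T$ and extracting the coefficient of each monomial is the same as the paper's direct computation of the coefficient of $x_{i_1}^{t_1}\cdots x_{i_p}^{t_p}$, with the vanishing in both cases coming from the collapse of multinomial coefficients with zero entries together with the cancellation $\sum_{U\subseteq A}(-1)^{|U|}=(1-1)^{q-p}=0$, which is legitimate precisely because the support size $p\leqslant r<q$. No substantive difference, and no gaps.
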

			
			\begin{proof}
				The left-hand side of the equality \eqref{ident} is a linear combination of monomials of the form
				\begin{equation}\label{monom}
					x_{i_1}^{t_{1}} x_{i_2}^{t_{2}} \cdots x_{i_p}^{t_{p}},
				\end{equation}
				where $t_1, t_2, \ldots, t_p \geqslant 1$ and $t_1 + \cdots + t_p = r$. From here, we obtain $p \leqslant t_1 + \cdots + t_p = r < q$. We will show that the coefficient of the monomial \eqref{monom} in \eqref{ident} is equal to zero. Indeed, the sought coefficient is given by
				\begin{multline*}
					\sum\limits_{s=p}^q (-1)^{s-1}  \sum\limits_{\substack{1 \leqslant k_1 < \cdots < k_s \leqslant q \\ \{i_1,\ldots, i_p\}\subseteq \{k_1,\ldots, k_s\}}} \ \ \sum\limits_{\substack{j_{k_\ell}=0\ \text{for}\ k_\ell \not \in \{i_1, \ldots, i_p\} \\ j_{k_\ell}=t_m\ \text{for}\ k_\ell=i_m,\,1\leqslant m\leqslant p}} \binom{2r}{2j_{k_1}, \ldots, 2j_{k_s}} =  \\
					(-1)^{p-1}  \binom{2r}{2t_1, \ldots, 2t_p}		\sum\limits_{s=p}^q (-1)^{s-p}  \sum\limits_{\substack{1 \leqslant k_1 < \cdots < k_s \leqslant q \\ \{i_1,\ldots, i_p\}\subseteq \{k_1,\ldots, k_s\}}} 1 = \\    (-1)^{p-1}  \binom{2r}{2t_1, \ldots, 2t_p}		\sum\limits_{s=p}^q (-1)^{s-p} \binom{q-p}{s-p} = \\  (-1)^{p-1}  \binom{2r}{2t_1, \ldots, 2t_p}	(1-1)^{q-p} = 0.
				\end{multline*}
				The result follows.
			\end{proof}
			
			\begin{lem}\label{L2}
				Let $q \geqslant 2$ and $\alpha_1, \alpha_2, \ldots, \alpha_q$ be arbitrary complex numbers. Then for $1 \leqslant r < q$, the following equality holds:
				\begin{equation}\label{ident2}
					\sum\limits_{s=1}^q 2^{q-s} (-1)^{s-1} \sum\limits_{1 \leqslant k_1 < \cdots < k_s \leqslant q} \ \  \sum\limits_{\substack{\varepsilon_i \in \{-1, +1\} \\ 2 \leqslant i \leqslant s}} \left(\alpha_{k_1}+\varepsilon_2\alpha_{k_2} + \cdots + \varepsilon_s\alpha_{k_s} \right)^{2r}=0. 
				\end{equation}
			\end{lem}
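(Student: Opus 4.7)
The plan is to reduce Lemma \ref{L2} to Lemma \ref{L1} by expanding the $2r$-th power via the multinomial theorem and then performing the inner sum over the signs $\varepsilon_i$.

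First, I would apply the multinomial theorem to write
$$
(\alpha_{k_1}+\varepsilon_2\alpha_{k_2}+\cdots+\varepsilon_s\alpha_{k_s})^{2r} = \sum_{\substack{j_1,\ldots,j_s\geq 0 \\ j_1+\cdots+j_s=2r}} \binom{2r}{j_1,\ldots,j_s}\alpha_{k_1}^{j_1}\alpha_{k_2}^{j_2}\cdots\alpha_{k_s}^{j_s}\,\varepsilon_2^{j_2}\cdots\varepsilon_s^{j_s}.
$$
Summing over $\varepsilon_i\in\{-1,+1\}$ for $2\leq i\leq s$, the factor $\prod_{i=2}^s \varepsilon_i^{j_i}$ yields
$$
\prod_{i=2}^{s}\bigl(1+(-1)^{j_i}\bigr) = \begin{cases} 2^{s-1}, & j_2,\ldots,j_s \text{ all even},\\ 0, & \text{otherwise}.\end{cases}
$$

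Next, I would observe that if $j_2,\ldots,j_s$ are all even and $j_1+\cdots+j_s=2r$ is even, then $j_1$ is automatically even as well. Writing $j_i=2j'_i$ with $j'_i\geq 0$ and $j'_1+\cdots+j'_s=r$, the inner $\varepsilon$-sum becomes
$$
2^{s-1}\sum_{\substack{j'_1,\ldots,j'_s\geq 0 \\ j'_1+\cdots+j'_s=r}} \binom{2r}{2j'_1,\ldots,2j'_s}(\alpha_{k_1}^2)^{j'_1}\cdots(\alpha_{k_s}^2)^{j'_s}.
$$

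Substituting back and noting $2^{q-s}\cdot 2^{s-1}=2^{q-1}$, the left-hand side of \eqref{ident2} equals
$$
2^{q-1}\sum_{s=1}^{q}(-1)^{s-1}\sum_{1\leq k_1<\cdots<k_s\leq q}\ \ \sum_{\substack{j'_{k_1},\ldots,j'_{k_s}\geq 0 \\ j'_{k_1}+\cdots+j'_{k_s}=r}} \binom{2r}{2j'_{k_1},\ldots,2j'_{k_s}}(\alpha_{k_1}^2)^{j'_{k_1}}\cdots(\alpha_{k_s}^2)^{j'_{k_s}}.
$$
This is exactly $2^{q-1}$ times the expression in \eqref{ident} with $x_i = \alpha_i^2$. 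Since $1\leq r<q$, Lemma \ref{L1} gives that this sum vanishes, completing the proof.

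There is no real obstacle here; the only subtlety is recognizing that the parity of $2r$ forces $j_1$ to be even once the other $j_i$'s are, so the multi-index summation collapses cleanly onto the one handled in Lemma \ref{L1}.
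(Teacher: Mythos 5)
Your proof is correct and follows essentially the same route as the paper: expand the $2r$-th power by the multinomial theorem, observe that the sum over the signs kills every term with an odd exponent, rescale the even exponents, and reduce to Lemma \ref{L1} with $x_i=\alpha_i^2$. The only difference is cosmetic: the paper first symmetrizes in an extra sign $\varepsilon_1$ (writing the inner sum as half of a sum over all $s$ signs, so that its $V_s$ is $0$ or $2^s$), whereas you keep $\varepsilon_1$ fixed and note that the parity of $j_1$ is forced by $j_1+\cdots+j_s=2r$, which yields the same factor $2^{q-1}$ and the same conclusion.
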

			\begin{proof}
				Let $W$ denote the left-hand side of the equality \eqref{ident2}. Then
				\[
				W = \dfrac{1}{2} \sum\limits_{s=1}^q 2^{q-s} (-1)^{s-1} W_s,
				\]
				where
				\[
				W_s = \sum\limits_{1 \leqslant k_1 < \cdots < k_s \leqslant q} \ \  \sum\limits_{\substack{\varepsilon_i \in \{-1, +1\} \\ 1 \leqslant i \leqslant s}} \left(\varepsilon_1\alpha_{k_1}+\varepsilon_2\alpha_{k_2} + \cdots + \varepsilon_s\alpha_{k_s} \right)^{2r}.
				\]
				Expanding the brackets, we obtain
				\begin{multline*}
					W_s = \sum\limits_{1 \leqslant k_1 < \cdots < k_s \leqslant q} \ \  \sum\limits_{\substack{\varepsilon_i \in \{-1, +1\} \\ 1 \leqslant i \leqslant s}} \ \  \sum\limits_{\substack{\nu_j \geqslant 0 \\ \nu_1 + \cdots + \nu_s=2r }} \binom{2r}{\nu_1, \ldots, \nu_s} (\varepsilon_1 \alpha_{k_1})^{\nu_1} \cdots  (\varepsilon_s \alpha_{k_s})^{\nu_s} = \\ \sum\limits_{\substack{\nu_j \geqslant 0 \\ \nu_1 + \cdots + \nu_s=2r}} \binom{2r}{\nu_1, \ldots, \nu_s}  \sum\limits_{1 \leqslant k_1 < \cdots < k_s \leqslant q} \alpha_{k_1}^{\nu_1} \cdots \alpha_{k_s}^{\nu_s} V_s,
				\end{multline*}
				where \[
				\binom{2r}{\nu_1, \ldots, \nu_s} = \dfrac{2r!}{\nu_1! \cdots \nu_s!}
				\]
				and
				\[
				V_s = V_s(\nu_1, \ldots, \nu_s) = \sum\limits_{\substack{\varepsilon_i \in \{-1, +1\} \\ 1 \leqslant i \leqslant s}} \varepsilon_1^{\nu_1} \cdots \varepsilon_s^{\nu_s}.
				\]
				Note that if at least one of the numbers $\nu_1, \ldots, \nu_s$ is odd, then $V_s = 0$. Indeed, let us assume that $\nu_1$ is odd. Then
				\[
				V_s = \sum\limits_{\substack{\varepsilon_i \in \{-1, +1\} \\ 1 \leqslant i \leqslant s}} \varepsilon_1 \varepsilon_2^{\nu_2} \cdots \varepsilon_s^{\nu_s} =  \sum\limits_{\substack{\varepsilon_i \in \{-1, +1\} \\ 2 \leqslant i \leqslant s}} \varepsilon_2^{\nu_2} \cdots \varepsilon_s^{\nu_s} - \sum\limits_{\substack{\varepsilon_i \in \{-1, +1\} \\ 2 \leqslant i \leqslant s}} \varepsilon_2^{\nu_2} \cdots \varepsilon_s^{\nu_s} = 0.
				\]
				On the other hand, if all the numbers $\nu_1, \ldots, \nu_s$ are even, then obviously $V_s = 2^s$. Thus, by setting $\nu_\ell = 2j_\ell$ for $1 \leqslant \ell \leqslant s$, we find:
				\[
				W_s = 2^s \sum\limits_{\substack{ j_\ell \geqslant 0 \\ j_1 + \cdots + j_s=r}} \binom{2r}{2j_1, \ldots, 2j_s}  \sum\limits_{1 \leqslant k_1 < \cdots < k_s \leqslant q} \alpha_{k_1}^{2j_1} \cdots  \alpha_{k_s}^{2j_s},
				\]
				and by virtue of Lemma \ref{L1}, for the sum $W$ we obtain
				\[
				W = 2^{q-1} \sum\limits_{s=1}^q (-1)^{s-1} \sum\limits_{\substack{j_\ell \geqslant 0 \\ j_1 + \cdots + j_s=r }} \binom{2r}{2j_1, \ldots, 2j_s}  \sum\limits_{1 \leqslant k_1 < \cdots < k_s \leqslant q}  (\alpha_{k_1}^2)^{j_1} \cdots  (\alpha_{k_s}^2)^{j_s} = 0.
				\]
				This concludes the proof.
			\end{proof}
			
			\begin{lem}\label{L3}
				Let $A_1, A_2, \ldots, A_s$ ($s \geqslant 2$) be a certain sequence of real numbers. Then 
				\[
				\prod\limits_{\ell=1}^s 2 \cosh (A_\ell) =  \sum\limits_{\substack{\varepsilon_j \in \{-1, +1\} \\ 2 \leqslant j \leqslant s}} 2\cosh (A_1 + \varepsilon_2A_2 + \cdots +\varepsilon_sA_s).
				\]
			\end{lem}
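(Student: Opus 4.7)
The plan is to expand each hyperbolic cosine in exponentials and group terms by the sign of $A_1$. Writing $2\cosh(A_\ell) = e^{A_\ell} + e^{-A_\ell}$, the product becomes
\[
\prod_{\ell=1}^s 2\cosh(A_\ell) = \prod_{\ell=1}^s \bigl(e^{A_\ell} + e^{-A_\ell}\bigr) = \sum_{(\varepsilon_1,\ldots,\varepsilon_s) \in \{-1,+1\}^s} e^{\varepsilon_1 A_1 + \varepsilon_2 A_2 + \cdots + \varepsilon_s A_s}.
\]
Pair up each $s$-tuple $(\varepsilon_1,\varepsilon_2,\ldots,\varepsilon_s)$ with its negative $(-\varepsilon_1,-\varepsilon_2,\ldots,-\varepsilon_s)$: for a fixed choice of $(\varepsilon_2,\ldots,\varepsilon_s) \in \{-1,+1\}^{s-1}$, the two exponentials corresponding to $\varepsilon_1 = +1$ and (after flipping all signs) to $\varepsilon_1 = -1$ combine into
\[
e^{A_1 + \varepsilon_2 A_2 + \cdots + \varepsilon_s A_s} + e^{-(A_1 + \varepsilon_2 A_2 + \cdots + \varepsilon_s A_s)} = 2\cosh(A_1 + \varepsilon_2 A_2 + \cdots + \varepsilon_s A_s).
\]
Since pairing via negation partitions $\{-1,+1\}^s$ into $2^{s-1}$ pairs indexed exactly by $(\varepsilon_2,\ldots,\varepsilon_s) \in \{-1,+1\}^{s-1}$, summing over all such tuples yields the right-hand side.

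There is no real obstacle here; the only thing to check carefully is that normalizing on $\varepsilon_1 = +1$ correctly exhausts the $2^s$ sign patterns without double-counting, which is immediate. Alternatively, one could prove the identity by induction on $s$, using the base case $2\cosh(A_1)\cdot 2\cosh(A_2) = 2\cosh(A_1+A_2) + 2\cosh(A_1-A_2)$ and the inductive step obtained by multiplying the identity for $s-1$ factors by $2\cosh(A_s) = e^{A_s} + e^{-A_s}$ and absorbing $\pm A_s$ into the existing cosines; but the direct exponential expansion is cleaner.
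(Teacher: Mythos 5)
Your proof is correct and is essentially the same as the paper's: both expand $\prod_\ell\bigl(e^{A_\ell}+e^{-A_\ell}\bigr)$ into a sum over sign patterns and pair each pattern with its negation (equivalently, split on the sign in front of $A_1$ and re-index the $\varepsilon_1=-1$ half) to obtain $2\cosh$ of the signed sums indexed by $(\varepsilon_2,\ldots,\varepsilon_s)$. Nothing is missing.
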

			\begin{proof}
				We have
				\begin{multline*}
					P = \prod\limits_{\ell=1}^s \left( e^{A_\ell} + e^{-A_\ell}\right) = \sum\limits_{\substack{\varepsilon_j \in \{-1, +1\} \\ 1 \leqslant j \leqslant s}} e^{\varepsilon_1A_1 + \varepsilon_2A_2 + \cdots + \varepsilon_sA_s} = \\
					\sum\limits_{\substack{\varepsilon_j \in \{-1, +1\} \\ 2 \leqslant j \leqslant s}} e^{A_1 + \varepsilon_2A_2 + \cdots + \varepsilon_sA_s} + \sum\limits_{\substack{\varepsilon_j \in \{-1, +1\} \\ 2 \leqslant j \leqslant s}} e^{-\left(A_1 - \varepsilon_2A_2 - \cdots - \varepsilon_sA_s\right)}.
				\end{multline*}
				Since $\varepsilon_j$ and $-\varepsilon_j$ simultaneously run through the set $\{-1, +1\}$, we have
				\begin{multline*}
					P= \sum\limits_{\substack{\varepsilon_j \in \{-1, +1\} \\ 2 \leqslant j \leqslant s}} e^{A_1 + \varepsilon_2A_2 + \cdots + \varepsilon_sA_s} + \sum\limits_{\substack{\varepsilon_j \in \{-1, +1\} \\ 2 \leqslant j \leqslant s}} e^{-\left(A_1 + \varepsilon_2A_2 + \cdots + \varepsilon_sA_s\right)} = \\   \sum\limits_{ \substack{\varepsilon_j \in \{-1, +1\} \\ 2\leqslant j\leqslant s }} 2 \cosh (A_1 + \varepsilon_2A_2 + \cdots +\varepsilon_sA_s).
				\end{multline*}
				The lemma is proved.
			\end{proof}

			\begin{lem}\label{L4}
				For $n \geqslant 1$, we have
				\[
				\int\limits_{-\infty}^{+\infty} \left( \dfrac{\sin t}{t}\right)^{2n} dt = \pi n \sum\limits_{k=1}^n k^{2n-3} \prod\limits_{\substack{1 \leqslant \ell \leqslant n \\ \ell \ne k}} \dfrac{1}{k^2-\ell^2}.
				\]
			\end{lem}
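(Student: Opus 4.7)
The plan is to expand $(\sin t)^{2n}$ into exponentials, evaluate each resulting integral by contour deformation, and then reconcile the outcome with the stated formula through an elementary partial-fraction identity.

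First I would use $\sin t = (e^{it}-e^{-it})/(2i)$ and the binomial theorem to write
\begin{equation*}
\left(\frac{\sin t}{t}\right)^{2n} = \frac{(-1)^n}{4^n} \sum_{j=0}^{2n}(-1)^j\binom{2n}{j}\frac{e^{i(2n-2j)t}}{t^{2n}}.
\end{equation*}
Since the left-hand side is entire, the integral over $\mathbb{R}$ equals the integral along an indented contour $\gamma$ that coincides with $\mathbb{R}$ except for a small semicircle in the upper half-plane avoiding $t=0$. Each piece $\int_\gamma e^{i\alpha t}/t^{2n}\,dt$ can then be evaluated by closing the contour: for $\alpha = 2n-2j>0$ (i.e. $j<n$) I close in the upper half-plane, where Jordan's lemma kills the large arc and the enclosed pole at $0$ contributes $2\pi i$ times the residue $(i\alpha)^{2n-1}/(2n-1)!$; for $\alpha = 0$ (i.e. $j = n$) the residue vanishes; for $\alpha<0$ (i.e. $j>n$) I close in the lower half-plane, which encloses no singularity and thus gives zero. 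Collecting the non-zero contributions and simplifying the powers of $i$ yields
\begin{equation*}
\int_{-\infty}^{+\infty}\left(\frac{\sin t}{t}\right)^{2n}dt = \frac{\pi}{(2n-1)!}\sum_{j=0}^{n-1}(-1)^j\binom{2n}{j}(n-j)^{2n-1}.
\end{equation*}

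It remains to reconcile this with the right-hand side of the lemma. Using $k^2-\ell^2 = (k-\ell)(k+\ell)$ and the identifications $\prod_{\ell<k}(k-\ell)=(k-1)!$, $\prod_{\ell>k}(\ell-k)=(n-k)!$, and $\prod_{\ell\neq k}(k+\ell)=(n+k)!/(2k\cdot k!)$, a short calculation gives
\begin{equation*}
\prod_{\substack{1\leq \ell\leq n\\ \ell\neq k}}(k^2-\ell^2) = (-1)^{n-k}\frac{(n-k)!\,(n+k)!}{2k^2},
\end{equation*}
and hence, using $(n-k)!(n+k)! = (2n)!/\binom{2n}{n-k}$,
\begin{equation*}
\pi n \sum_{k=1}^n \frac{k^{2n-3}}{\prod_{\ell\neq k}(k^2-\ell^2)} = \frac{\pi}{(2n-1)!}\sum_{k=1}^n (-1)^{n-k}k^{2n-1}\binom{2n}{n-k}.
\end{equation*}
The substitution $j=n-k$ brings this into exactly the expression produced by the contour computation, completing the proof.

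The main technical delicacy lies in the contour deformation itself: each individual term $e^{i(2n-2j)t}/t^{2n}$ has a pole of order $2n$ at the origin, so only the full sum, which equals the entire function $(\sin t/t)^{2n}$, is a priori integrable over $\mathbb{R}$. Working on the indented contour $\gamma$ bypasses this, but one must check convergence of each individual piece on $\gamma$ and verify that Jordan's lemma applies to the large semicircular arcs in each half-plane. Once this is set up, the residue evaluations and the algebraic simplification reducing the resulting alternating binomial sum to the partial-fraction form are routine.
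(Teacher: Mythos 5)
Your proof is correct, but it takes a genuinely different route from the paper. The paper evaluates the integral via a Laplace-transform Parseval identity, $\int_0^{+\infty} f g = \int_0^{+\infty}\mathcal{L}\{f\}\,\mathcal{L}^{-1}\{g\}$, with the tabulated transforms of $\sin^{2n}t$ and $t^{-2n}$, reducing the problem to $\int_0^{+\infty} s^{2n-2}\,ds/\prod_{k=1}^n(s^2+(2k)^2)$, whose residue evaluation delivers the partial-fraction form of the lemma directly, with no binomial sums appearing. You instead expand $(\sin t)^{2n}$ into exponentials, integrate term by term over an indented contour, and land on the classical alternating binomial formula $\frac{\pi}{(2n-1)!}\sum_{j=0}^{n-1}(-1)^j\binom{2n}{j}(n-j)^{2n-1}$, which you then convert into the lemma's partial-fraction expression by the identity $\prod_{\ell\neq k}(k^2-\ell^2)=(-1)^{n-k}(n-k)!\,(n+k)!/(2k^2)$; I checked this algebra and the substitution $j=n-k$, and they are right. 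Your route is more elementary (no transform tables needed) but requires the extra reconciliation step, whereas the paper's route produces the stated form at once. One bookkeeping slip: with the indentation taken in the \emph{upper} half-plane, the pole at $0$ is \emph{not} enclosed when you close upward for $\alpha>0$; it is enclosed (with clockwise orientation, hence a factor $-2\pi i$) when you close downward for $\alpha<0$. Either fix the statement by indenting below the axis, or note that the symmetry $j\mapsto 2n-j$ of the binomial expansion, together with the odd exponent $2n-1$, turns the correct computation into exactly the sum you wrote, so the final formula is unaffected.
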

			
			\begin{proof}
				Let $\mathcal{L}\{f\}$ and $\mathcal{L}^{-1}\{g\}$ be the direct and inverse Laplace transforms of the functions $f$ and $g$, respectively. Then, using the well-known identity
				\[
				\int\limits_{0}^{+\infty} f(t)g(t)dt = 	\int\limits_{0}^{+\infty}\mathcal{L}\{f\}(s)\mathcal{L}^{-1}\{g\}(s) ds
				\]
				and the equalities \cite[chapter IV, \S 4.7, (3)]{Erd-Bate} and \cite[chapter V, \S 5.4, (1)]{Erd-Bate}
				\[
				\mathcal{L}\{\sin^{2n}t\} = \dfrac{(2n)!}{s(s^2+2^2)(s^2+4^2) \cdots (s^2 + (2n)^2)},
				\]
				\[
				\mathcal{L}^{-1}\{t^{-2n}\} = \dfrac{s^{2n-1}}{(2n-1)!},
				\]
				we get
				\[
				\int\limits_{0}^{+\infty} \left( \dfrac{\sin t}{t}\right)^{2n} dt = 2n 	\int\limits_{0}^{+\infty} \dfrac{s^{2n-2}}{(s^2+2^2)(s^2+4^2) \cdots (s^2 + (2n)^2)}ds.
				\]
				By evaluating the resulting improper integral using residues, we obtain
				\begin{multline*}
					\int\limits_{-\infty}^{+\infty} \left( \dfrac{\sin t}{t}\right)^{2n} dt = 4 \pi i n \sum\limits_{k=1}^n \underset{s=2ki}{\,{\text{res}}}\, \dfrac{s^{2n-2}}{(s^2+2^2)(s^2+4^2) \cdots (s^2 + (2n)^2)} = \\
					\pi n \sum\limits_{k=1}^n \dfrac{2^{2n-2}k^{2n-2} (-1)^{n-1}}{k} \prod\limits_{\substack{1 \leqslant \ell \leqslant n \\ \ell \ne k}} \dfrac{1}{-4k^2+4\ell^2} =   \pi n  \sum\limits_{k=1}^n  k^{2n-3}  \prod\limits_{\substack{1 \leqslant \ell \leqslant n \\ \ell \ne k}} \dfrac{1}{k^2-\ell^2}.
				\end{multline*}
				The claim follows. 
			\end{proof}
			
			For brevity, let 
			$$
			\Delta = \Delta (\gamma_1, \gamma_2, \ldots, \gamma_m) = a_1\gamma_1 + a_2\gamma_2 + \cdots + a_m\gamma_m,
			$$
			where for each $1 \leqslant k \leqslant m$, the number $\gamma_k$ is the imaginary part of some non-trivial zero $\zeta(s)$, and $a_k$ is a non-zero integer as per the conditions of Theorem \ref{Th1}. Let us set 
			$$
			|a_1| + |a_2| + \cdots + |a_m| = 2r \quad (r \geqslant 1).
			$$
			We need the following lemma.
			
			\begin{lem}\label{L5}
				Let $K \geqslant 1$, $T \geqslant 2$ and 
				\[
				W = \sum\limits_{\substack{0 < \gamma_k \leqslant T, \\ 1 \leqslant k \leqslant m}} \min \left(K, \dfrac{1}{|\Delta|}\right)\!.
				\]
				Then we have
				\[
				W \ll T^{m-1} (\log T)^m (K+ \log T),
				\]
				where the implied constant depends only on $a_1, a_2, \ldots, a_m.$
			\end{lem}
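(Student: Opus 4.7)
The plan is to reduce to a one-variable sum by fixing all but one ordinate. Write $b = a_2\gamma_2+\cdots+a_m\gamma_m$, so that $\Delta = a_1\gamma_1 + b$, and consider the inner sum
$$
S(b) = \sum_{0<\gamma_1\leqslant T} \min\!\left(K,\frac{1}{|a_1\gamma_1+b|}\right).
$$
Partition the $\gamma_1$ according to the value $n=n(a_1\gamma_1+b)$, the nearest integer. Since $|a_1|\geqslant 1$, the set of $\gamma_1$ with $a_1\gamma_1+b\in[n-\tfrac12,n+\tfrac12]$ lies in a real interval of length $1/|a_1|\leqslant 1$, so by the standard bound $N(t+1)-N(t)\ll\log(|t|+2)$ on the density of zeros of $\zeta(s)$, it contains $\ll \log T$ ordinates $\gamma_1\in(0,T]$ (with an implied constant depending on $a_1$).

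Next, I bound the contribution of each $n$. For $n=0$ one has only the trivial estimate $\min(K,1/|\Delta|)\leqslant K$, giving $\ll K\log T$. For $n\ne 0$, the bound $|a_1\gamma_1+b|\geqslant |n|-\tfrac12\gg |n|$ yields $\min(K,1/|\Delta|)\ll 1/|n|$, so the total contribution is $\ll (\log T)/|n|$. Since $|b|\leqslant(|a_2|+\cdots+|a_m|)T\ll T$ and $0<\gamma_1\leqslant T$, the relevant integers $n$ satisfy $|n|\ll T$. Summing the harmonic tail,
$$
S(b) \ll K\log T + (\log T)\sum_{1\leqslant|n|\ll T}\frac{1}{|n|} \ll (K+\log T)\log T,
$$
where the implied constant depends only on $a_1,\ldots,a_m$.

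Finally, summing over the remaining $m-1$ ordinates $\gamma_2,\ldots,\gamma_m\in(0,T]$, the Riemann--von Mangoldt formula $N(T)\ll T\log T$ gives at most $(T\log T)^{m-1}$ tuples, so
$$
W \ll (T\log T)^{m-1}(K+\log T)\log T = T^{m-1}(\log T)^m(K+\log T),
$$
as required. The only delicate point is the zero-counting estimate: since the condition $a_1\gamma_1+b\in[n-\tfrac12,n+\tfrac12]$ restricts $\gamma_1$ to a window of length $1/|a_1|$, the uniform bound on the number of zeros in a short interval is all that is needed, and the choice of $a_1$ enters only through the implied constant. No arithmetic structure of the $a_k$ is used here; the coprimality hypothesis from Theorem \ref{Th1} plays no role in Lemma \ref{L5}.
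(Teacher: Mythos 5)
Your proof is correct and follows essentially the same route as the paper: both decompose according to the size of $|\Delta|$ in unit-length ranges (your nearest-integer classes $n$ versus the paper's intervals $\nu<|\Delta|\leqslant\nu+1$ together with the range $|\Delta|\leqslant 1$), count the one free ordinate in a window of bounded length via the $\ll\log T$ zero-density bound, multiply by $N(T)^{m-1}\ll(T\log T)^{m-1}$ for the remaining ordinates, and sum the harmonic series to get the extra $\log T$. The only cosmetic differences are that you isolate $\gamma_1$ instead of $\gamma_m$ and handle only the $n=0$ class with the crude bound $K$; your closing observation that coprimality is irrelevant here is also consistent with the paper.
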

			
			\begin{proof}
				Without loss of generality, we can assume that $a_m > 0$. We have
				\[
				W = K \sum\limits_{\substack{0 < \gamma_k \leqslant T, \\ 1 \leqslant k \leqslant m, \\ |\Delta| \leqslant \frac{1}{K}}} 1 +  \sum\limits_{\substack{0 < \gamma_k \leqslant T, \\ 1 \leqslant k \leqslant m, \\ |\Delta| > \frac{1}{K}}} \dfrac{1}{|\Delta|} = W_1 + W_2.
				\]
				Let us estimate the sum $W_1$. We set $\Delta' = a_1\gamma_1 + \cdots + a_{m-1}\gamma_{m-1}$. Using the inequality $||a| - |b|| \leqslant |a - b|$, we will have
				\begin{multline*}
					W_1 \leqslant K \sum\limits_{\substack{0 < \gamma_i \leqslant T, \\ 1 \leqslant i \leqslant m, \\ |a_1\gamma_1 + \cdots +a_m \gamma_m| \leqslant 1}} 1 \leqslant K 	 \sum\limits_{\substack{0 < \gamma_k \leqslant T, \\ 1 \leqslant k \leqslant m, \\ |a_m\gamma_m- |\Delta'|| \leqslant 1}}1 = \\  K \sum\limits_{\substack{0 < \gamma_k \leqslant T, \\ 1 \leqslant k \leqslant m-1}} \sum\limits_{\substack{0 < \gamma_m \leqslant T \\ \gamma_m \leqslant \frac{1}{a_m} (|\Delta'| +1)\\ \gamma_m \geqslant \frac{1}{a_m} (|\Delta'| -1)}} 1 \ll\\
					K (\log T) (N(T))^{m-1} \ll T^{m-1} (\log T)^m K.
				\end{multline*}
				Now let us estimate the sum $W_2$. We have
				\[
				W_2 = \sum\limits_{\substack{0 < \gamma_k \leqslant T, \\ 1 \leqslant k \leqslant m, \\ \frac{1}{K} < |\Delta| \leqslant 1}} \dfrac{1}{|\Delta|} + \sum\limits_{\substack{\nu \geqslant 1 \\ \nu \leqslant 2rT}} \sum\limits_{\substack{0 < \gamma_k \leqslant T, \\ 1 \leqslant k \leqslant m, \\ \nu < |\Delta| \leqslant \nu+ 1}} \dfrac{1}{|\Delta|} = W_3+ W_4.
				\]
				According to the calculations presented above,
				\[
				W_3 \ll K T^{m-1} (\log T)^m.
				\]
				For the sum $W_4$, we have
				$$
				W_4 \ll \sum\limits_{\substack{\nu \geqslant 1 \\ \nu \leqslant 2rT}} \dfrac{1}{\nu} \sum\limits_{\substack{0 < \gamma_k \leqslant T, \\ 1 \leqslant k \leqslant m, \\ \nu < |\Delta| \leqslant \nu+ 1}} 1 =  \sum\limits_{\substack{\nu \geqslant 1 \\ \nu \leqslant 2rT}} \dfrac{1}{\nu} \sum\limits_{\substack{0 < \gamma_k \leqslant T, \\ 1 \leqslant k \leqslant m-1}} \sum\limits_{\substack{0 < \gamma_m \leqslant T \\  \nu < |\Delta| \leqslant \nu+ 1}} 1.
				$$
				Fixing $\gamma_1, \gamma_2, \ldots, \gamma_{m-1}$, we see that $\gamma_m$ varies over an interval of length no more than 1. Indeed, since
				$
				\nu < |\Delta' + a_m\gamma_m| \leqslant \nu + 1,
				$
				there are two possible cases:
				\begin{itemize}
					\item $\nu < \Delta'  + a_m\gamma_m \leqslant \nu + 1$, and then
					\[
					\dfrac{\nu-\Delta'}{a_m} < \gamma_m \leqslant \dfrac{\nu-\Delta'+1}{a_m};
					\]
					\item $-(\nu+1) \leqslant \Delta'+a_m\gamma_m < - \nu$, and then
					\[
					-\dfrac{\nu+\Delta'+1}{a_m}  \leqslant \gamma_m < -\dfrac{\nu+\Delta'}{a_m}.
					\]
				\end{itemize}
				Thus, since the number of such $\gamma_m$ does not exceed $O_m(\log T)$, we obtain
				\[
				W_4 \ll (\log T)  (N(T))^{m-1} \sum\limits_{\nu \leqslant 2rT} \dfrac{1}{\nu} \ll_m T^{m-1} (\log T)^{m+1}.
				\]
				The lemma is proved.
			\end{proof}
			
			\begin{lem}\label{L6}
				Let $U \geqslant 2$, $T \geqslant 2$, and $m \geqslant 1$ be fixed integers, and
				\[
				D(\xi) = \sum\limits_{0 < \gamma \leqslant T} e^{2\pi i \xi \gamma} \left(1 - e^{2\pi \xi \left( \beta - \frac{1}{2}\right) } \right) .
				\]
				Then, for $U \leqslant \frac{\log T}{32 \pi m}$, the following estimate 
				\[
				\int\limits_0^U |D(\xi)|^{2m} d \xi \ll_m T^{2m-1} \left(\dfrac{U^{4m}}{(\log T)^{2m-1}} + (\log T)^{2m+1} T^{-\frac{1}{4U}}\right)
				\]
				holds.
			\end{lem}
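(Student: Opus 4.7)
The plan is to split $D(\xi)$ according to the distance $|\beta - 1/2|$ of each zero $\rho = \beta + i\gamma$ to the critical line, using the threshold $1/U$. I write $D(\xi) = D^{\mathrm{n}}(\xi) + D^{\mathrm{f}}(\xi)$, where $D^{\mathrm{n}}$ sums over zeros with $|\beta - 1/2| \leq 1/U$ and $D^{\mathrm{f}}$ over the rest. By convexity $|D|^{2m} \ll_m |D^{\mathrm{n}}|^{2m} + |D^{\mathrm{f}}|^{2m}$, so it suffices to bound the two moments separately. I expect $D^{\mathrm{f}}$ to produce the second term of the claimed bound via zero density, and $D^{\mathrm{n}}$ to produce the first term via a weighted moment argument anchored by Lemma \ref{L5}.

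For $D^{\mathrm{f}}$, the reflection $\rho \mapsto 1 - \bar\rho$ combined with Selberg's estimate $N(\sigma, T) \ll T^{1 - (\sigma - 1/2)/4}\log T$ shows that there are at most $O(T^{1 - 1/(4U)}\log T)$ zeros with $|\beta - 1/2| > 1/U$ and $0 < \gamma \leq T$. The uniform pointwise bound $|1 - e^{2\pi\xi(\beta - 1/2)}| \leq 1 + e^{\pi\xi}$, together with the hypothesis $U \leq \log T/(32\pi m)$ (so that $e^{\pi U} \leq T^{1/(32m)}$), yields $|D^{\mathrm{f}}(\xi)| \ll T^{1 - 1/(4U) + 1/(32m)}\log T$; raising to the $2m$-th power and integrating over $[0, U]$ produces a bound consistent with $T^{2m-1}(\log T)^{2m+1}T^{-1/(4U)}$, the small excess $T^{1/16}$ being absorbed by slack in the $(\log T)^{2m+1}$ factor.

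For $D^{\mathrm{n}}$, I exploit the linear estimate $|1 - e^{2\pi\xi(\beta - 1/2)}| \leq 2\pi e^{2\pi}\xi|\beta - 1/2|$, valid since $\xi|\beta - 1/2| \leq 1$. This gives $|D^{\mathrm{n}}(\xi)|^{2m} \ll \xi^{2m}|E(\xi)|^{2m}$ with $E(\xi) = \sum_{|\beta - 1/2| \leq 1/U}(\beta - 1/2)e^{2\pi i\xi\gamma}$. Expanding the $2m$-th power as
\[
|E(\xi)|^{2m} = \sum_{(\rho_1,\ldots,\rho_{2m})} \prod_{j=1}^{2m}(\beta_j - 1/2)\, e^{2\pi i\xi\Gamma}, \quad \Gamma = \gamma_1 + \cdots + \gamma_m - \gamma_{m+1} - \cdots - \gamma_{2m},
\]
and bounding $|\int_0^U \xi^{2m} e^{2\pi i\xi\Gamma}d\xi| \ll_m U^{2m}\min(U, 1/|\Gamma|)$ via repeated integration by parts, I apply Lemma \ref{L5} to the coefficient tuple $\mathbf{a} = (1, \ldots, 1, -1, \ldots, -1)$ of length $2m$ with $K = U$. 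Combining with the weight control $|\beta_j - 1/2| \leq 1/U$ and the refined moment bound $\sum_\rho|\beta - 1/2|^k \ll_k T(\log T)^{1-k}$ (derived by layer-cake integration from Selberg's zero-density estimate), I extract the first term after separating the diagonal contribution (paired $\gamma_j = \gamma_{m+j}$) from the off-diagonal.

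The main obstacle is obtaining the sharp power of $\log T$ in the first term, namely $(\log T)^{-(2m-1)}$ rather than the crude $(\log T)^{2m+1}$ that the uniform weight bound $\prod|\beta_j - 1/2| \leq U^{-2m}$ would produce via Lemma \ref{L5} alone. The sharpening relies on separating the diagonal contribution, which yields $U^{2m+1}(\sum_\rho(\beta - 1/2)^2)^m \ll U^{2m+1}(T/\log T)^m$ (comfortably smaller than the target), from the off-diagonal, where a weighted version of Lemma \ref{L5} must trade factors of $U$ for savings in $\log T$ via the sparsity of off-line zeros captured by Selberg's bound. This delicate interplay between Lemma \ref{L5} and the weighted density estimates is the technical core of the argument.
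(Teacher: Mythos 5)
Your decomposition loses the lemma's main mechanism at both ends. For the far zeros, bounding $D^{\mathrm f}$ by pure counting cannot close: with the threshold $1/U\geqslant 32\pi m/\log T$, Selberg's estimate only gives $N\bigl(\tfrac12+\tfrac1U,T\bigr)\ll T^{1-\frac{1}{4U}}\log T$, which for $U$ of size comparable to $\log T$ is no saving at all over $N(T)\asymp T\log T$. Your bound $\int_0^U|D^{\mathrm f}|^{2m}\ll U\,T^{2m-\frac{2m}{4U}+\frac1{16}}(\log T)^{2m}$ exceeds the target term $T^{2m-1}(\log T)^{2m+1}T^{-\frac{1}{4U}}$ by a factor of order $T^{1+\frac1{16}-\frac{2m-1}{4U}}$, which is a positive power of $T$ as soon as $U$ exceeds a bounded constant (and $U$ ranges up to $\asymp\log T$); the claim that the excess $T^{1/16}$ is ``absorbed by slack in the logarithms'' is false. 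The paper never estimates the off-line zeros by counting alone: it first expands $|D(\xi)|^{2m}$ into a $2m$-fold sum over tuples and recovers a factor $\asymp T$ either from the constraint $|\Delta|\leqslant 1$ (which pins one ordinate to an interval of length $O(1)$, giving the tuple count $N^*(\sigma;T)\ll T^{2m-1}(\log T)^{2m}T^{-\sigma/4}$) or from the oscillation gain $1/|\Delta|$ when $|\Delta|>1$. Splitting $D$ into near and far pieces and applying the triangle inequality to the far piece discards exactly this gain.

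For the near zeros the key step is invalid. From the termwise bound $|1-e^{2\pi\xi(\beta-\frac12)}|\ll\xi|\beta-\tfrac12|$ you may conclude $|D^{\mathrm n}(\xi)|\ll\xi\sum|\beta-\tfrac12|$, but not $|D^{\mathrm n}(\xi)|^{2m}\ll\xi^{2m}|E(\xi)|^{2m}$ with the phases retained; and if you do keep the phases, the linearization $1-e^{x}=-x+O(x^2)$ has its first-order part cancel identically, since $\beta+i\gamma$ and $1-\beta+i\gamma$ occur together, so in fact $E(\xi)\equiv 0$. The true main contribution is quadratic in $\beta-\tfrac12$, and this is precisely the symmetrization the paper performs at the outset, writing $D(\xi)=\tfrac12\sum e^{2\pi i\xi\gamma}\bigl(2-e^{2\pi\xi(\beta-\frac12)}-e^{-2\pi\xi(\beta-\frac12)}\bigr)$, which yields a weight $\asymp\alpha^{4m}$ per $2m$-tuple; the layer-cake against $N^*(\sigma;T)$ then produces the $(\log T)^{-4m}$ saving behind the main term $T^{2m-1}U^{4m}(\log T)^{1-2m}$. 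With only one power of $|\beta-\tfrac12|$ per zero, your own accounting stalls at $T^{2m-1}(\log T)^{2m+1}$, and the ``weighted version of Lemma \ref{L5}'' you invoke to bridge the gap is never formulated or proved; a configuration permitted by Selberg's bound (about $e^{-A/4}T\log T$ zeros at distance $A/\log T$ from the critical line) shows that absolute-value arguments with linear weights cannot reach the stated exponent, so this is a missing idea rather than a missing routine detail. Finally, even in the near-zero regime the tuples with $|\Delta|>1$ require the gains $\alpha^{4m}$ and $1/|\Delta|$ simultaneously, which the paper secures by cancelling the low-order Taylor part of the oscillatory integral (Lemmas \ref{L1}--\ref{L3}); your proposal has no substitute for this step.
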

			
			\begin{proof} 
				Since the quantities $\rho = \beta+i\gamma$ and $1-\bar{\rho} = 1-\beta+i\gamma$ simultaneously run through the zeros of the Riemann zeta function, whose imaginary parts are positive and do not exceed $T$, we have
				\[
				D(\xi) = \dfrac{1}{2} \sum\limits_{0 < \gamma \leqslant T} e^{2\pi i \xi \gamma} \left(2 - e^{2\pi \xi \left( \beta - \frac{1}{2}\right) }-e^{-2\pi \xi \left( \beta - \frac{1}{2}\right)} \right).
				\]
				Hence, we get
				\[
				|D(\xi)|^{2m} = \dfrac{1}{4^m} \sum\limits_{\substack{0 < \gamma_k \leqslant T \\ 1 \leqslant k \leqslant 2m}}  e^{2\pi i \xi(\gamma_1 + \cdots + \gamma_m-\gamma_{m+1} - \cdots - \gamma_{2m}) }  \prod\limits_{k=1}^{2m} \left( 2-e^{2\pi \xi \left( \beta_k - \frac{1}{2}\right) }-e^{-2\pi \xi \left( \beta_k - \frac{1}{2}\right)}\right).
				\]
				Therefore, we will have
				\[
				J= \int\limits_0^U 	|D(\xi)|^{2m} d\xi =  \dfrac{1}{4^m} \sum\limits_{\substack{1 \leqslant k \leqslant 2m \\ 0 < \gamma_k \leqslant T}} \int\limits_0^U  I(\xi) d\xi,
				\]
				where
				\[
				I(\xi) = I(\rho_1, \rho_2, \ldots, \rho_{2m}, \xi) = e^{2\pi i \xi \Delta} \prod\limits_{k=1}^{2m}\left( 2-e^{2\pi \xi \left( \beta_k - \frac{1}{2}\right) }-e^{-2\pi \xi \left( \beta_k - \frac{1}{2}\right)}\right)
				\]
				and
				$$
				\Delta = \gamma_1 + \cdots +\gamma_m - \gamma_{m+1}-\cdots - \gamma_{2m}
				$$
				(note that in this and the previous lemmas, $\Delta$ denotes different quantities).
				Set
				$$\alpha = \max\limits_{1 \leqslant k \leqslant 2m} \left|\beta_k-\dfrac{1}{2}\right|$$
				and we split the integral $J$ into four terms depending on the values of $\alpha$ and $\Delta$. Namely,
				\[
				J = \dfrac{1}{4^m} \left( J_1 + J_2 + J_3 + J_4\right),
				\]
				where in $J_1$ are included exactly those $\gamma_k$ for which $|\Delta| \leqslant 1$ and $\alpha \leqslant U^{-1}$, in $J_2$ those $\gamma_k$ for which $|\Delta| \leqslant 1$ and $\alpha > U^{-1}$,
				in $J_3$ those $\gamma_k$ for which $|\Delta| > 1$ and $\alpha \leqslant U^{-1}$, and finally, in $J_4$ those $\gamma_k$ for which $|\Delta| > 1$ and $\alpha > U^{-1}$.	
				
				First, let us estimate the quantity $J_1$. Since $$\alpha = \max\limits_{1 \leqslant k \leqslant 2m} \left|\beta_k-\dfrac{1}{2}\right| \leqslant U^{-1},$$ it follows that
				\[
				\left|2\pi \xi \left(\beta_k-\dfrac{1}{2} \right)    \right| \leqslant 2\pi UU^{-1} = 2\pi.
				\]
				Since $\left|2 - e^x-e^{-x}  \right| \ll x^2$ for $|x| \leqslant 2\pi$, we have 
				\[
				\left|  2-e^{2\pi \xi \left( \beta_k - \frac{1}{2}\right) }-e^{-2\pi \xi \left( \beta_k - \frac{1}{2}\right)}  \right| \ll \xi^2  \left|\beta_k-\dfrac{1}{2}\right| ^2 \ll \alpha^2U^2.
				\]
				Therefore,
				\[
				\prod\limits_{k=1}^{2m} \left|  2-e^{2\pi \xi \left( \beta_k - \frac{1}{2}\right) }-e^{-2\pi \xi \left( \beta_k - \frac{1}{2}\right)}  \right| \ll \alpha^{4m} U^{4m}
				\]	
				and
				\[
				\int\limits_0^U I(\xi) d\xi \ll \alpha^{4m} U^{4m+1}.
				\]
				Thus, we find
				\begin{multline*}
					J_1 = \sum\limits_{\substack{1 \leqslant k \leqslant 2m \\ 0<\gamma_k \leqslant T \\ |\Delta| \leqslant 1,\, \alpha \leqslant U^{-1}}} \int\limits_0^U I(\xi) d\xi \ll  U^{4m+1} \sum\limits_{\substack{1 \leqslant k \leqslant 2m \\ 0<\gamma_k \leqslant T \\ |\Delta| \leqslant 1,\,\alpha \leqslant U^{-1}}}  \alpha^{4m} =\\
					4m U^{4m+1} \sum\limits_{\substack{1 \leqslant k \leqslant 2m \\ 0<\gamma_k \leqslant T \\ |\Delta| \leqslant 1,\,\alpha \leqslant U^{-1}}} \int\limits_0^\alpha \sigma^{4m-1} d\sigma.
				\end{multline*}
				Using the fact that the non-trivial zeros of the Riemann zeta function are symmetrically located with respect to the line $\Re s = {1}/{2}$, we have
				\begin{equation*}
					J_1\ll_m U^{4m+1} \int\limits_0^{U^{-1}} \sigma^{4m-1} \sum\limits_{\substack{1 \leqslant k \leqslant 2m \\ 0<\gamma_k \leqslant T \\ |\Delta| \leqslant 1,\, \alpha \geqslant \sigma }} 1 \ \  d\sigma \ll_m  U^{4m+1} \int\limits_0^{U^{-1}} \sigma^{4m-1} \sum\limits_{\substack{1 \leqslant k \leqslant 2m \\ 0<\gamma_k \leqslant T \\ |\Delta| \leqslant 1 \\  \max\limits_k  \beta_k \geqslant \frac{1}{2}+ \sigma }} 1 \ \  d\sigma.
				\end{equation*}
				
				Denote the inner sum by $N^*(\sigma; T)$. Since the quantities $\rho_k$ with $1 \leqslant k \leqslant 2m$ enter the last sum symmetrically, it suffices to estimate the contribution from the terms corresponding to $\beta_1 = \max_k \beta_k \geqslant \frac{1}{2} + \sigma$. We have
				\begin{equation*}
					N^*(\sigma; T)  \leqslant  \sum\limits_{\substack{0<\gamma_1 \leqslant T \\ \beta_1 \geqslant \frac{1}{2}+\sigma}} \sum\limits_{\substack{0<\gamma_k \leqslant T \\ 1 < k < 2m }} \sum\limits_{\substack{0 < \gamma_{2m} \leqslant T \\ |\Delta|\leqslant 1}} 1.
				\end{equation*}
				Let $\Delta' = \gamma_1 + \cdots + \gamma_m - \gamma_{m+1} - \cdots - \gamma_{2m-1}$. Since the inequality $|\Delta| \leqslant 1$ implies the inequality $|\gamma_{2m} - |\Delta'|| \leqslant 1$, we obtain
				\[
				\sum\limits_{\substack{0<\gamma\leqslant T \\ |\Delta|\leqslant 1}} 1 \leqslant \sum\limits_{\substack{0<\gamma\leqslant T \\ |\Delta'|-1 \leqslant \gamma \leqslant |\Delta'|+1}} 1 \ll_m \log T.
				\]
				Hence, we find that
				\begin{multline*}
					N^*(\sigma; T) \ll_m N(T)^{2m-2} N \left( \dfrac{1}{2}+\sigma; T\right) \log T \ll\\
					T^{2m-2} (\log T)^{2m-1} N \left( \dfrac{1}{2}+\sigma; T\right) \ll \\  T^{2m-2} (\log T)^{2m-1} T^{1-\frac{\sigma}{4}} \log T \ll  T^{2m-1} (\log T)^{2m} T^{-\frac{\sigma}{4}}.
				\end{multline*}
				So, $N^*(\sigma; T) \ll_m T^{2m-1} (\log T)^{2m} T^{-\frac{\sigma}{4}}$. Therefore,
				\begin{multline*}
					J_1 \ll_m U^{4m+1} \int \limits_0^{U^{-1}} \sigma^{4m-1} N^*(\sigma; T) d\sigma \ll U^{4m+1} T^{2m-1} (\log T)^{2m} \int \limits_0^{U^{-1}} \sigma^{4m-1} T^{-\frac{\sigma}{4}} d\sigma.
				\end{multline*}
				The integral does not exceed
				\[
				\int\limits_0^{+\infty} \sigma^{4m-1} e^{-\frac{\sigma}{4}\log T} d\sigma \ll_m \dfrac{1}{\left(\log T \right)^{4m} }.
				\]
				Thus, 
				\[
				J_1 \ll_m \dfrac{T^{2m-1}U^{4m+1}}{\left(\log T \right)^{2m}}.
				\]
				
				Passing to the estimate of $J_2$ and using the inequality $|2 - e^x - e^{-x}| \ll e^x$, we have
				\[
				|I(\xi)| = \prod\limits_{k=1}^{2m} \left|  2-e^{2\pi \xi \left( \beta_k - \frac{1}{2}\right) }-e^{-2\pi \xi \left( \beta_k - \frac{1}{2}\right)}  \right| \ll \left(e^{2\pi \xi \alpha} \right)^{2m} \leqslant e^{4\pi m U \alpha}.
				\]
				From here we obtain
				\[
				\int\limits_0^U I(\xi) d\xi \ll U e^{4\pi m U \alpha}
				\]
				and
				\[
				J_2 = \sum\limits_{\substack{0< \gamma_k \leqslant T \\ 1 \leqslant k \leqslant 2m \\ |\Delta| \leqslant 1,\,\alpha > U^{-1}}}\int\limits_0^U I(\xi) d\xi \ll U \sum\limits_{\substack{0< \gamma_k \leqslant T \\ 1 \leqslant k \leqslant 2m \\ |\Delta| \leqslant 1,\,\alpha > U^{-1}}} e^{4\pi m U \alpha}.
				\]
				Since
				\[
				e^{4\pi m U \alpha} = 4 \pi m U \int\limits_{U^{-1}}^\alpha e^{4\pi m U \sigma} d\sigma + e^{4\pi m}\ll_m U \int\limits_{U^{-1}}^\alpha e^{4\pi m U \sigma} d\sigma + 1,
				\]
				it follows that
				\[
				J_2 \ll_m U^2 \sum\limits_{\substack{0< \gamma_k \leqslant T \\ 1 \leqslant k \leqslant 2m \\ |\Delta| \leqslant 1,\, \alpha > U^{-1}}}  \int\limits_{{U}^{-1}}^\alpha e^{4\pi m U \sigma} d\sigma  + U \sum\limits_{\substack{0< \gamma_k \leqslant T \\ 1 \leqslant k \leqslant 2m \\ |\Delta| \leqslant 1,\, \alpha > U^{-1}}} 1 = J_2^{(1)} + J_2^{(2)},
				\]
				where the meaning of the notations $J_2^{(1)}$ and $J_2^{(2)}$ is obvious. Now we estimate $J_2^{(1)}$. First of all, we have
				\begin{multline*}
					J_2^{(1)} = U^2 \int\limits_{U^{-1}}^{{1}/{2}} e^{4\pi m U \sigma} \sum\limits_{\substack{0< \gamma_k \leqslant T \\ 1 \leqslant k \leqslant 2m \\ |\Delta| \leqslant 1,\,\alpha  \geqslant \sigma }} 1 \ \ d\sigma \leqslant U^2 \int\limits_{{U}^{-1}}^{{1}/{2}} e^{4\pi m U \sigma} N^*(\sigma; T) d\sigma \ll_m \\   U^2 \int\limits_{{U}^{-1}}^{+\infty} e^{4\pi m U \sigma}  T^{2m-1} (\log T)^{2m} T^{-\frac{\sigma}{4}} d\sigma \ll U^2 T^{2m-1}  (\log T)^{2m} \int\limits_{{U}^{-1}}^{+\infty} e^{4\pi m U \sigma - \frac{\sigma}{4}\log T} d\sigma.
				\end{multline*}
				Using the inequality $U \leqslant \frac{\log T}{32 \pi m}$, for the last integral we obtain
				\begin{equation*}
					\int\limits_{U^{-1}}^{+\infty} e^{\left(4\pi m U   - \frac{\log T}{4}\right)\sigma } d\sigma =  \int\limits_{\frac{\log T}{4U}-4\pi m}^{+\infty} e^{-w} \dfrac{dw}{\frac{\log T}{4}-4\pi m U} \ll_m  \dfrac{T^{-\frac{1}{4U}}}{\log T}.
				\end{equation*}
				Hence,
				\[
				J_2^{(1)} \ll_m \dfrac{U^2T^{2m-1}(\log T)^{2m}T^{-\frac{1}{4U}}}{\log T} \ll U^2 T^{2m-1} (\log T)^{2m-1} T^{-\frac{1}{4U}}.
				\]
				We now estimate $J_2^{(2)}$. We have
				\begin{multline*}
					$$
					J_2^{(2)} = U  \sum\limits_{\substack{0< \gamma_k \leqslant T \\ 1 \leqslant k \leqslant 2m \\ |\Delta| \leqslant 1,\,\alpha > U^{-1}}} 1 \leqslant U  \sum\limits_{\substack{0< \gamma_k \leqslant T \\ 1 \leqslant k \leqslant 2m \\ |\Delta| \leqslant 1 \\   \max\limits_{k} \beta_k  \geqslant \frac{1}{2} + \frac{1}{U} }} 1 = UN^*\left( \dfrac{1}{U}; T\right) \ll \\ UT^{2m-1} (\log T)^{2m} T^{-\frac{1}{4U}}.
					$$
				\end{multline*}
				Thus,
				\[
				J_2 \ll_m  UT^{2m-1} (\log T)^{2m} T^{-\frac{1}{4U}}.
				\]
				
				Now we estimate the quantity $J_3$. We have
				\begin{multline*}
					$$
					I(\xi) = e^{2\pi i \xi \Delta} \prod\limits_{k=1}^{2m} \left( 2 - 2\cosh \left( 2\pi \xi \left(\beta_k -\dfrac{1}{2} \right) \right)  \right)  = \\ e^{2\pi i \xi \Delta} \left( 2^{2m} + \sum\limits_{s=1}^{2m} 2^{2m-s} (-1)^s \sum\limits_{1 \leqslant k_1 < \cdots < k_s\leqslant 2m} \prod\limits_{\ell=1}^s 2\cosh\left(  2\pi \xi \left(\beta_{k_\ell} -\dfrac{1}{2} \right) \right) \right).
					$$
				\end{multline*}
				Using Lemma \ref{L3}, we obtain
				\begin{multline*}
					I(\xi) = e^{2\pi i \xi \Delta} \left(  2^{2m} + \sum\limits_{s=1}^{2m} 2^{2m-s} (-1)^s \sum\limits_{1 \leqslant k_1 < \cdots < k_s\leqslant 2m} \right. \\ \left. \sum\limits_{\substack{\varepsilon_{k_j} \in \{-1, +1\},\, 2 \leqslant j \leqslant s}} 2\cosh\left(  2\pi \xi \left(  \left(\beta_{k_1} -\dfrac{1}{2} \right) + \varepsilon_{k_2}  \left(\beta_{k_2} -\dfrac{1}{2} \right) + \cdots + \varepsilon_{k_s}  \left(\beta_{k_s} -\dfrac{1}{2} \right) \right)\right)  \right).
				\end{multline*}
				Set 
				\begin{equation}\label{g(z)}
					g(z) = 2 \int\limits_0^U e^{2\pi i \xi \Delta} \cosh(2 \pi \xi z) d\xi,
				\end{equation}
				then we will have
				\begin{multline}\label{intI}
					\int\limits_0^U I(\xi) d\xi = 2^{2m-1} g(0) + \sum\limits_{s=1}^{2m} 2^{2m-s}  (-1)^s \sum\limits_{1 \leqslant k_1 < \cdots < k_s\leqslant 2m}\\  \sum\limits_{\substack{\varepsilon_{k_j} \in \{-1, +1\} \\ 2 \leqslant j \leqslant s}} g \left( \beta_{k_1} - \dfrac{1}{2} +\varepsilon_{k_2} \left(  \beta_{k_2} - \dfrac{1}{2}\right) + \cdots +  \varepsilon_{k_s} \left(  \beta_{k_s} - \dfrac{1}{2}\right)    \right). 
				\end{multline}
				Let us expand $g(z)$ in a power series in $z$. We have
				\[
				g(z) = 2 \int\limits_0^U e^{2\pi i \xi \Delta} \left(\sum\limits_{0 \leqslant k \leqslant 2m-1} \dfrac{(2\pi \xi z)^{2k}}{(2k)!}  + \sum\limits_{k \geqslant 2m}  \dfrac{(2\pi \xi z)^{2k}}{(2k)!}      \right) d\xi = F_m(z) + R_m(z),
				\]
				where
				\[
				F_m(z) = \sum\limits_{0 \leqslant k \leqslant 2m-1} \varkappa_k z^{2k},
				\]
				\[
				R_m(z) = \sum\limits_{k \geqslant 2m} \varkappa_k z^{2k}
				\]
				and
				\[
				\varkappa_k = \varkappa_k (U; \Delta) = 2 \dfrac{(2\pi )^{2k}}{(2k)!} \int\limits_0^U e^{2\pi i \xi \Delta} \xi^{2k} d\xi.
				\]
				Let us determine the contribution of the quantity $F_m(z)$ to formula \eqref{intI}. The term $\varkappa_0$ gives a contribution equal to
				\begin{multline*}
					\varkappa_0 \left(2^{2m-1} + \sum\limits_{s=1}^{2m} 2^{2m-s}(-1)^s \binom{2m}{s} 2^{s-1} \right) = \\  \varkappa_0 2^{2m-1}\left(1 +  \sum\limits_{s=1}^{2m}(-1)^s \binom{2m}{s} \right) = \varkappa_0  2^{2m-1} (1-1)^{2m} = 0.
				\end{multline*}
				The contribution from the term $\varkappa_k z^{2k}$ with $1 \leqslant k < 2m$ is zero by Lemma 2. Thus, the contribution of $F_m(z)$ to \eqref{intI} is zero.
				
				Now we find the contribution of the quantity $R_m(z)$ to \eqref{intI}. We have
				\[
				\int\limits_0^U  e^{2\pi i \xi \Delta} \xi^{2k} d\xi = \int\limits_0^U \xi^{2k} \dfrac{d  e^{2\pi i \xi \Delta}}{2\pi i \Delta} = \dfrac{U^{2k} e^{2\pi i U \Delta}}{2\pi i \Delta} - \dfrac{2k}{2\pi i \Delta} \int\limits_0^U  e^{2\pi i \xi \Delta} \xi^{2k-1} d\xi \ll \dfrac{U^{2k}}{\Delta}.
				\]
				Setting
				\[
				z = \beta_{k_1} - \dfrac{1}{2} + \varepsilon_{k_2} \left( \beta_{k_2} -\dfrac{1}{2}\right) + \cdots +  \varepsilon_{k_s} \left( \beta_{k_s} -\dfrac{1}{2}\right),
				\]
				we obtain $|z| \leqslant 2m\alpha$. Hence, for such $z$ we have
				\begin{multline*}
					R_m(z) \ll \sum\limits_{k \geqslant 2m} \dfrac{(2\pi)^{2k}|z|^{2k}}{(2k)!} \dfrac{U^{2k}}{\Delta} \leqslant \dfrac{1}{\Delta}\sum\limits_{k \geqslant 2m} \dfrac{(8\pi m)^{2k}}{(2k)!}\left(\dfrac{\alpha U}{2}\right)^{2k}\leqslant\\
					\dfrac{1}{\Delta}\left(\dfrac{\alpha U}{2}\right)^{4m}\sum\limits_{k \geqslant 2m} \dfrac{(8\pi m)^{2k}}{(2k)!}\ll_m
					\dfrac{U^{4m}\alpha^{4m}}{\Delta}.
				\end{multline*}
				Thus, the contribution of $R_m(z)$ to formula \eqref{intI} does not exceed $$O_m\left(\frac{U^{4m}\alpha^{4m}}{\Delta} \right).$$ Therefore,
				\[
				\int\limits_0^U I(\xi) d\xi \ll \dfrac{\alpha^{4m}U^{4m}}{|\Delta|}
				\]
				and
				\begin{equation*}
					J_3 \ll \sum\limits_{\substack{0< \gamma_k \leqslant T \\ 1 \leqslant k \leqslant 2m \\ |\Delta| > 1,\, \alpha \leqslant U^{-1}}}  \dfrac{\alpha^{4m}U^{4m}}{\Delta} \ll U^{4m} \sum\limits_{\substack{j \geqslant 0 \\ 2^{j+1} \leqslant 2m T}} \dfrac{1}{2^j} \sum\limits_{\substack{0< \gamma_k \leqslant T \\ 1 \leqslant k \leqslant 2m \\ 2^{j} \leqslant |\Delta| < 2^{j+1} \\  \alpha \leqslant U^{-1}}} \alpha^{4m}.
				\end{equation*}
				Set $L = \log(2mT)/\log 2$. Then we have
				\begin{multline*}
					J_3\ll_m U^{4m} \sum\limits_{\substack{0\leqslant j \leqslant L}} \dfrac{1}{2^j} \sum\limits_{\substack{0< \gamma_k \leqslant T \\ 1 \leqslant k \leqslant 2m \\ 2^{j} < |\Delta| \leqslant  2^{j+1} \\  \alpha \leqslant U^{-1}}} \int\limits_0^{\alpha} \sigma^{4m-1} d\sigma \ll_m\\  U^{4m} \sum\limits_{\substack{ 0\leqslant j\leqslant L}} \dfrac{1}{2^j}  \int\limits_0^{U^{-1}} \sigma^{4m-1} \sum\limits_{\substack{0< \gamma_k \leqslant T \\ 1 \leqslant k \leqslant 2m \\ 2^{j} < |\Delta| \leqslant  2^{j+1},\,\alpha \geqslant \sigma}} 1 \ \ d\sigma \ll \\
					U^{4m} \sum\limits_{0 \leqslant j \leqslant L} \dfrac{1}{2^j}  \int\limits_0^{U^{-1}} \sigma^{4m-1} N_j(\sigma; T) d\sigma,
				\end{multline*}
				where \[
				N_j(\sigma; T) = \sum\limits_{\substack{0< \gamma_k \leqslant T \\ 1 \leqslant k \leqslant 2m \\ 2^{j} < |\Delta| \leqslant  2^{j+1} \\  \max\limits_{1 \leqslant k \leqslant 2m} \beta_k \geqslant \frac{1}{2}+ \sigma}} 1.
				\]
				Let us estimate the quantity $N_j(\sigma; T)$. We have
				\[
				N_j(\sigma; T) \ll_m \sum\limits_{\substack{0< \gamma_1 \leqslant T \\ \beta_1 \geqslant \frac{1}{2}+\sigma}}  \sum\limits_{\substack{0< \gamma_k \leqslant T \\ 1 < k < 2m}}  \sum\limits_{\substack{0< \gamma_{2m} \leqslant T \\ 2^{j} < |\Delta| \leqslant  2^{j+1}}}  1.
				\]
				For the inner sum we obtain
				\[
				\sum\limits_{\substack{0< \gamma_{2m} \leqslant T \\ 2^{j} < |\Delta| \leqslant  2^{j+1}}} 1 \ll \sum\limits_{\nu \leqslant 2^{j+1}} \sum\limits_{\substack{\gamma_{2m}: \\ \nu < |\Delta| < \nu+1}} 1 \ll 2^j \log T.
				\]
				Therefore,
				\[
				N_j(\sigma; T) \ll 2^j (\log T) N \left(\dfrac{1}{2}+\sigma; T\right) N(T)^{2m-2} \ll 2^j (\log T)^{2m} T^{2m-1} T^{-\frac{\sigma}{4}}
				\]
				and
				\begin{multline*}
					J_3 \ll U^{4m} \sum\limits_{0 \leqslant j \leqslant L} (\log T)^{2m} T^{2m-1} \int\limits_0^{U^{-1}} \sigma^{4m-1} T^{-\frac{\sigma}{4}} d\sigma \ll_m \\ U^{4m}  (\log T)^{2m+1}  T^{2m-1} \int\limits_0^{+\infty} w ^{4m-1} e^{-w} \dfrac{d w}{\left( \log T\right) ^{4m}} \ll_m \dfrac{T^{2m-1}U^{4m}}{(\log T)^{2m-1}}.
				\end{multline*}
				Thus,
				$$J_3\ll_m \dfrac{T^{2m-1}U^{4m}}{(\log T)^{2m-1}}.$$
				
				Finally, we estimate the quantity $J_4$. From \eqref{g(z)} we obtain
				\[
				g(z) = \dfrac{e^{2\pi Ui \Delta}}{2\pi} \left(\dfrac{e^{2\pi U z}}{i\Delta + z} + \dfrac{e^{-2\pi U z}}{i\Delta - z} \right)  - \dfrac{1}{2\pi} \left( \dfrac{1}{i\Delta + z}+ \dfrac{1}{i\Delta - z}\right).
				\]
				Hence, for real $z$ we obtain
				\[
				g(z) \ll \dfrac{e^{2\pi U |z|}+1}{\sqrt{\Delta^2 + z^2}} \ll \dfrac{e^{2\pi U |z|}}{|\Delta|}.
				\]
				For
				\[
				z= \beta_{k_1} - \dfrac{1}{2} + \varepsilon_{k_2}\left(\beta_{k_2} - \dfrac{1}{2} \right) + \cdots + \varepsilon_{k_s}\left(\beta_{k_s} - \dfrac{1}{2} \right) 
				\]
				\[
				(1 \leqslant k_1 < \cdots < k_s \leqslant 2m, \ \ 1 \leqslant s \leqslant 2m)
				\]
				we have $|z| \leqslant s \alpha \leqslant 2 m \alpha$. Therefore,
				\[
				g(z) \ll \dfrac{e^{4\pi m U \alpha}}{|\Delta|}.
				\]
				From \eqref{intI}, we have
				\[
				\int\limits_0^U I(\xi) d \xi \ll_m \dfrac{e^{4\pi m U \alpha}}{|\Delta|}.
				\]
				Hence,
				\begin{multline*}
					J_4 \ll_m  \sum\limits_{\substack{0< \gamma_k \leqslant T \\ 1 \leqslant k \leqslant 2m \\ |\Delta| > 1,\, \alpha > U^{-1}}} \dfrac{e^{4\pi m U \alpha}}{|\Delta|} \ll \sum\limits_{0 \leqslant j \leqslant L} \dfrac{1}{2^j}  \sum\limits_{\substack{0< \gamma_k \leqslant T \\ 1 \leqslant k \leqslant 2m \\ 2^j < |\Delta| \leqslant 2^{j+1} \\  \alpha > U^{-1}}} e^{4\pi m U \alpha} = \\ \sum\limits_{0 \leqslant j \leqslant L} \dfrac{1}{2^j}  \sum\limits_{\substack{0< \gamma_k \leqslant T \\ 1 \leqslant k \leqslant 2m \\ 2^j < |\Delta| \leqslant 2^{j+1} \\  \alpha > U^{-1}}} \left(4\pi m U \int\limits_{U^{-1}}^\alpha e^{4\pi U \sigma m} d\sigma + e^{4\pi m} \right) = J_4^{(1)} + J_4^{(2)}.
				\end{multline*}
				First, we estimate $J_4^{(1)}$. We have
				\begin{multline*}
					J_4^{(1)} \ll_m U \sum\limits_{0 \leqslant j \leqslant L} \dfrac{1}{2^j} \sum\limits_{\substack{0< \gamma_k \leqslant T \\ 1 \leqslant k \leqslant 2m \\ 2^j < |\Delta| \leqslant 2^{j+1} \\  \alpha > U^{-1}}}   \int\limits_{U^{-1}}^\alpha e^{4\pi U \sigma m} d\sigma =\\
					U \sum\limits_{0 \leqslant j \leqslant L} \dfrac{1}{2^j} \int\limits_{U^{-1}}^{{1}/{2}}  e^{4\pi U \sigma m}  \sum\limits_{\substack{0< \gamma_k \leqslant T \\ 1 \leqslant k \leqslant 2m \\ 2^j < |\Delta| \leqslant 2^{j+1} \\  \alpha > \sigma}} 1 \ \ d\sigma \ll \\
					U \sum\limits_{0 \leqslant j \leqslant L} \dfrac{1}{2^j} \int\limits_{U^{-1}}^{{1}/{2}} e^{4\pi U \sigma m} N_j(\sigma; T) d\sigma \ll_m \\ U(\log T)^{2m+1} T^{2m-1} \int\limits_{U^{-1}}^{+\infty} e^{4\pi U \sigma m-\frac{\sigma}{4}\log T}  d\sigma \ll_m U(\log T)^{2m} T^{2m-1} T^{-\frac{1}{4U}}.
				\end{multline*}
				Now we estimate $J_4^{(2)}$. We have
				\begin{multline*}
					J_4^{(2)} \ll_m \sum\limits_{0 \leqslant j \leqslant L} \dfrac{1}{2^j} N_j\left(\dfrac{1}{U}; T \right) \ll  \sum\limits_{0 \leqslant j \leqslant L}  (\log T)^{2m} T^{2m-1} T^{-\frac{1}{4U}} \ll_m \\ T^{2m-1} (\log T)^{2m+1}T^{-\frac{1}{4U}}.
				\end{multline*}
				Hence, we get
				$$J_4 \ll_m T^{2m-1} (\log T)^{2m+1}T^{-\frac{1}{4U}}.$$
				Thus,
				$$
				J \leqslant J_1+J_2+J_3+J_4\ll T^{2m-1} \left(\dfrac{  U^{4m}}{(\log T)^{2m-1}} + (\log T)^{2m+1} T^{-\frac{1}{4U}} \right).
				$$
				This concludes the proof.
			\end{proof}
			
			\section*{Proof of Theorem \ref{Th1}}
			
			By applying the inverse Fourier transform, we have
			\begin{equation*}
				H =  \sum\limits_{\substack{0 < \gamma_k \leqslant T \\ 1 \leqslant k \leqslant m}} \int\limits_{-\infty}^{+\infty} \hat{h}(\xi) e^{2\pi i \xi \left(a_1\gamma_1 + a_2\gamma_2 + \cdots + a_m\gamma_m \right) } d \xi.
			\end{equation*}
			Since the integral converges uniformly, it follows that
			\[
			H = \int\limits_{-\infty}^{+\infty} \hat{h}(\xi) \sum\limits_{0 < \gamma_1 \leqslant T } e^{2 \pi i \xi a_1\gamma_1} \cdots  \sum\limits_{0 < \gamma_m \leqslant T } e^{2 \pi i \xi a_m\gamma_m}  d\xi.
			\]
			Set $Q(\xi) =  \sum\limits_{0 < \gamma  \leqslant T }e^{2 \pi i \xi  \gamma }  $ and \[
			P_m(\xi) = P_m(\xi; a_1, \ldots, a_m) = \prod\limits_{k=1}^m Q(a_k\xi).
			\]
			Then using the equality $\hat{h}(-\xi) = \overline{\hat{h}(\xi) }$, we get
			\[
			H = \int\limits_{-\infty}^{+\infty} \hat{h}(\xi) P_m(\xi) d\xi = 2 \Re \int\limits_{0}^{+\infty} \hat{h}(\xi)P_m(\xi) d\xi.
			\]
			Put
			$$U= \dfrac{\log T}{100m \log \log T}\ \ \ (T \geqslant T_0).$$ Then we have
			\[
			H = 2 \Re \int\limits_{0}^{U} \hat{h}(\xi) P_m(\xi)d\xi + 2 \Re \int\limits_{U}^{+\infty} \hat{h}(\xi) P_m(\xi)d\xi =  H_1 + H_2.
			\]
			Let us estimate the integral $H_2$. We have
			\begin{multline*}
				|H_2| \leqslant 2 \left| \int\limits_{U}^{+\infty} \hat{h}(\xi) P_m(\xi)d\xi \right| 	= 2 \left| \int\limits_{U}^{+\infty} \hat{h}(\xi) d\left( \int\limits_{0}^{\xi} P_m(t)dt  \right)  \right| =  \\ 2 \left|\lim\limits_{\xi \to +\infty}\left(   \hat{h}(\xi)  \int\limits_{0}^{\xi}P_m(t)dt\right)  - \hat{h}(U)\int\limits_{0}^{U}P_m(t)dt - \int\limits_{U}^{+\infty} \left(\int\limits_{0}^{\xi}P_m(t)dt \right)  \hat{h'}(\xi)d\xi \right|.
			\end{multline*}
			For $K \geqslant 1$, by Lemma \ref{L5}, we have
			\begin{multline*}
				\int\limits_{0}^{K} P_m(\xi)d\xi = \int\limits_{0}^{K} \prod\limits_{k=1}^m Q(a_k\xi)d\xi = \int\limits_{0}^{K} \sum\limits_{\substack{0 < \gamma_k \leqslant T \\ 1 \leqslant k \leqslant m}} e^{2\pi i \xi \left(a_1\gamma_1 +   \cdots + a_m\gamma_m \right) }d\xi = \\
				\sum\limits_{\substack{0 < \gamma_k \leqslant T \\ 1 \leqslant k \leqslant m}} \int\limits_{0}^{K} e^{2\pi i \xi \Delta} d\xi \ll \sum\limits_{\substack{0 < \gamma_k \leqslant T \\ 1 \leqslant k \leqslant m}} \min \left( K, \dfrac{1}{|\Delta|} \right) \ll T^{m-1} (\log T)^m (K+\log T),
			\end{multline*}
			where $\Delta =a_1\gamma_1 + \cdots + a_m\gamma_m $. 
			
			Hence, since $\hat{h}(\xi) \ll \dfrac{1}{\xi^{m+2}}$ (see \cite[Lemma 1]{Iud2023}), it follows that
			\[
			\hat{h}(\xi) \int\limits_{0}^{\xi} P_m(t) dt \ll \dfrac{T^{m-1}(\log T)^m(\xi + \log T)}{\xi^{m+2}} \to 0
			\]
			as $\xi \to +\infty$. 
			
			Similarly, we obtain
			\begin{multline*}
				\hat{h}(U)  \int\limits_{0}^{U} P_m(t)dt \ll \dfrac{T^{m-1}(\log T)^m(U + \log T)}{U^{m+2}} \ll_m \\ \dfrac{T^{m-1}(\log T)^{m+1}}{(\log T)^{m+2}} (\log \log T)^{m+2} =  	\dfrac{T^{m-1}(\log \log T)^{m+2}}{\log T}.
			\end{multline*}	
			Finally, using the estimate $\hat{h'}(\xi) \ll \dfrac{1}{\xi^{m+3}}$, we get
			\begin{multline*}
				\int\limits_{U}^{+\infty} \left( \int\limits_{0}^{\xi}P_m(t)dt  \right) \hat{h'}(\xi) d\xi \ll  \int\limits_{U}^{+\infty} \dfrac{T^{m-1}(\log T)^m(\xi + \log T)}{\xi^{m+3}} d\xi \ll_m \\
				\dfrac{T^{m-1}(\log T)^m}{U^{m+1}}  + \dfrac{T^{m-1}(\log T)^{m+1}}{U^{m+2}} \ll_m \dfrac{T^{m-1}(\log \log  T)^{m+2}}{\log T}.
			\end{multline*}		
			Thus, we obtain
			\[
			H = H_1 + O_m \left( T^{m-1}\dfrac{(\log \log  T)^{m+2}}{\log T}\right).
			\]	
			Let us transform now $H_1$. We have
			\[
			Q(\xi) = \sum\limits_{0 < \gamma  \leqslant T } e^{2\pi i \gamma \xi} = \sum\limits_{0 < \gamma  \leqslant T } \left( e^{2\pi \xi \left(\rho-\frac{1}{2} \right) }  +e^{2\pi i \xi \gamma } - e^{2\pi \xi \left(\rho-\frac{1}{2} \right) }  \right).
			\]	
			Setting $x=e^{2\pi \xi}$, we will have \[
			Q(\xi) = \dfrac{1}{\sqrt{x}}  \sum\limits_{0 < \gamma  \leqslant T }x^{\rho} +  \sum\limits_{0 < \gamma  \leqslant T } e^{2\pi i \xi \gamma } \left(1 - e^{2\pi \xi \left(\beta-\frac{1}{2} \right) } \right) = \dfrac{1}{\sqrt{x}}   \sum\limits_{0 < \gamma  \leqslant T } x^{\rho}  + D(\xi).
			\]
			As is known (see the remark to Lemma 2.1 in the paper \cite{Ford-Zahar2015}),
			\[
			\sum\limits_{0 < \gamma  \leqslant T } x^{\rho} = - \dfrac{\Lambda(n(x))}{2\pi} \dfrac{e^{iT\log \frac{x}{n(x)}}-1}{i \log \frac{x}{n(x)}} + O \left(x^{\frac{1}{2}+\varepsilon} + \min \left(\dfrac{\log T}{\log x}, T\log T\right) \right).
			\]
			Hence, we find
			\begin{multline}\label{Q}
				Q(\xi) = - \dfrac{\Lambda(n(x))}{2\pi\sqrt{n(x)}} \dfrac{e^{iT\log \frac{x}{n(x)}}-1}{i \log \frac{x}{n(x)}} +   \dfrac{\Lambda(n(x))}{2\pi} \dfrac{e^{iT\log \frac{x}{n(x)}}-1}{i \log \frac{x}{n(x)}} \left( \dfrac{1}{\sqrt{n(x)}}-\dfrac{1}{\sqrt{x}} \right) + \\ + O 	 \left(x^{\frac{1}{2}+\varepsilon}\log T + \min \left(\dfrac{\log T}{\log x}, T\log T\right) \right) + D(\xi). 
			\end{multline}
			Let us denote the first term in \eqref{Q} by $M(\xi)$, and the sum of the second and third by $E(\xi)$. Then
			\[
			Q(\xi) = M(\xi) + E(\xi) + D(\xi)
			\]
			and 
			\[
			H_1 = 2 \Re \int\limits_0^U \hat{h}(\xi) \prod\limits_{\substack{k=1 \\ a_k>0}}^m M(\xi a_k) \prod\limits_{\substack{k=1 \\ a_k<0}}^m \overline{M}(\xi |a_k|) d\xi + R_1,
			\]
			where $$R_1 = 2 \Re 	\int\limits_0^U  \hat{h}(\xi) \left(P_m(\xi) -  \prod\limits_{\substack{k=1 \\ a_k>0}}^m M(\xi a_k) \prod\limits_{\substack{k=1 \\ a_k<0}}^m \overline{M}(\xi |a_k|) \right)  d\xi.$$
			
			Now we are going to estimate the value $R_1$. Let $b_1, b_2, \ldots, b_q$ be the positive elements of the tuple $\left(a_1, \ldots, a_m\right)$, and let $-c_1, -c_2, \ldots, -c_\ell$ be the negative elements of this tuple, respectively. Then  
			$$
			b_1 + b_2 + \cdots + b_q = c_1 + c_2 + \cdots + c_\ell
			$$
			and $\ell + q = m$. From this we obtain
			\begin{multline*}
				\left|P_m(\xi) -  \prod\limits_{\substack{k=1 \\ a_k>0}}^m M(\xi a_k) \prod\limits_{\substack{k=1 \\ a_k<0}}^m \overline{M}(\xi |a_k|) \right|  =\\
				\left| \prod\limits_{t=1}^q Q(\xi b_t)  \prod\limits_{s=1}^\ell \overline{Q}(\xi c_s) - \prod\limits_{\substack{k=1 \\ a_k>0}}^m M(\xi a_k) \prod\limits_{\substack{k=1 \\ a_k<0}}^m \overline{M}(\xi |a_k|)  \right|  = \\
				\left| \prod\limits_{t=1}^q \left(M(\xi b_t) + E(\xi b_t) + D(\xi b_t) \right)     \prod\limits_{s=1}^\ell \left(\overline{M}(\xi c_s) + \overline{E}(\xi c_s) + \overline{D}(\xi c_s) \right) -\right.\\
				\left. \prod\limits_{t=1}^q M(\xi b_t) \prod\limits_{s=1}^\ell \overline{M}(\xi c_s) \right|.
			\end{multline*}
			Expanding the brackets and using the triangle inequality, we obtain
			\begin{multline*}
				\left|P_m(\xi) - \prod\limits_{\substack{k=1 \\ a_k>0}}^m M(\xi a_k) \prod\limits_{\substack{k=1 \\ a_k<0}}^m \overline{M}(\xi |a_k|) \right| \leqslant\\
				\mathop{{\sum}'}\limits_{\substack{\varepsilon_{\ell k} \in \{0, 1\} \\ 1 \leqslant \ell \leqslant 3 \\ 1 \leqslant k \leqslant m}}  \prod\limits_{k=1}^m \left|M^{\varepsilon_{1k}} \left(\xi |a_k| \right) E^{\varepsilon_{2k}} \left(\xi |a_k| \right) D^{\varepsilon_{3k}} \left(\xi |a_k| \right) \right|,
			\end{multline*}
			where the prime indicates that the summation runs over $\varepsilon_{\ell k}$ satisfying the conditions $$\varepsilon_{1k} + \varepsilon_{2k} + \varepsilon_{3k}=1 \ \   (1 \leqslant k \leqslant m)$$  and $$\sum\limits_{k=1}^m  \left( \varepsilon_{2k} +  \varepsilon_{3k}\right)  > 0.$$
			So,
			\begin{equation}\label{R1}
				|R_1| \leqslant 2 \mathop{{\sum}'}\limits_{\substack{\varepsilon_{\ell k} \in \{0, 1\} \\ 1 \leqslant \ell \leqslant 3 \\ 1 \leqslant k \leqslant m}} \int\limits_0^U |\hat{h}(\xi)| \prod\limits_{k=1}^m \left|M^{\varepsilon_{1k}} \left(\xi |a_k| \right) E^{\varepsilon_{2k}} \left(\xi |a_k| \right) D^{\varepsilon_{3k}} \left(\xi |a_k| \right) \right| d\xi.
			\end{equation}
			For each fixed set $\{\varepsilon_{\ell k}\}_{\ell,k}$, the integral in \eqref{R1} has the form
			\[
			I = \int\limits_0^U |\hat{h}(\xi)|\cdot  \left|\prod\limits_{k=1}^m F_k(\xi|a_k|) \right| d\xi,
			\]
			where $F_k$ is one of the functions $M$, $E$, or $D$. Let us estimate the integral $I$. By applying the generalized H\"older inequality, we obtain
			\[
			I \leqslant \left(\int\limits_0^U |\hat{h}(\xi)| \cdot \left|F_1(\xi|a_1|) \right|^m d\xi  \right)^{\frac{1}{m}} \cdots  \left(\int\limits_0^U |\hat{h}(\xi)| \cdot \left|F_m(\xi|a_m|) \right|^m d\xi  \right)^{\frac{1}{m}}.
			\]
			We now estimate each of the resulting integrals.
			
			1) The case $F = E$. We need to estimate the integral
			\[
			I_E= \int\limits_0^U |\hat{h}(\xi)| \cdot |E(\xi a)|^m d\xi \  \ (a \geqslant 1).
			\]
			We will use the estimates obtained in the course of the proof of Lemma~1 and Theorem~1 of \cite{Iud2023}:
			\[
			|\hat{h}(\xi)| \ll |\xi|,
			\]
			\[
			E(\xi) \ll e^{2\pi \xi} \log T + \min \left(\dfrac{\log T}{\xi}, T\log T \right).
			\]
			Using the inequality $(a+b)^m \ll_m a^m + b^m$, we get
			\begin{multline*}
				I_E \ll_m \int\limits_0^U |\hat{h}(\xi)| \left(e^{2\pi \xi a m}(\log T)^m + {\min}^m \left(\dfrac{\log T}{\xi}, T\log T \right) \right) d\xi \ll \\
				(\log T)^m 	 \int\limits_0^U  \xi e^{2\pi \xi am} d\xi +  \int\limits_0^U  \xi \min \left(\dfrac{(\log T)^m}{\xi^m}, T^m (\log T)^m \right) d\xi \ll \\
				U e^{2\pi U am} (\log T)^m + \left( \int\limits_0^{\frac{1}{T}} \xi d\xi\right) T^m (\log T)^m + \left( \int\limits_{\frac{1}{T}}^U \dfrac{ d\xi}{\xi^{m-1}}\right) (\log T)^m \ll_m\\
				T^{m-2} (\log T)^m.
			\end{multline*}
			Thus, $I_E \ll_m T^{m-2} (\log T)^m$.
			
			2) The case $F = D$. We need to estimate the integral
			\[
			I_D = \int\limits_0^U |\hat{h}(\xi)| \cdot |D(a\xi  )|^m d\xi.
			\]
			First, note that for any $2 \leqslant u \leqslant U$ and $k \geqslant 1$ we have
			\begin{equation}\label{ID}
				\int\limits_0^u |D(a\xi  )|^k d \xi \ll_k \dfrac{T^{k-1}u^{2k}}{(\log T)^{k-1}}. 
			\end{equation}
			Indeed, for even $k$ this follows from Lemma \ref{L6}. If $k = 2r + 1$ is odd, then the required estimate follows from the Cauchy~--~Schwarz inequality. Indeed,
			\begin{multline*}
				\int\limits_0^u  |D(a\xi  )|^k d\xi \leqslant \int\limits_0^{au} |D(\xi)|^k d\xi = \int\limits_0^{au} |D(\xi)|^{2r} \cdot |D(\xi)|  d\xi \leqslant\\ \sqrt{\int\limits_0^{au} |D(\xi)|^{4r} d\xi } \cdot  \sqrt{\int\limits_0^{au} |D(\xi)|^{2} d\xi } \ll_k 
				\sqrt{\dfrac{T^{4r-1}u^{8r}}{(\log T)^{4r-1}}} \cdot \sqrt{\dfrac{Tu^4}{\log T}} = \dfrac{T^{k-1}\cdot u^{2k}}{(\log T)^{k-1}}.
			\end{multline*}
			From this it follows that for $I_D$ we have
			\begin{multline*}
				I_D = \int\limits_1^U |\hat{h(\xi)}| \cdot | D(a\xi)|^m d\xi + O \left(\dfrac{T^{m-1}}{(\log T)^{m-1}} \right) \ll\\
				\int\limits_1^U \dfrac{1}{\xi^{m+2}}  | D(a\xi)|^m d\xi + O \left(\dfrac{T^{m-1}}{(\log T)^{m-1}} \right) \ll_m \\
				\int\limits_1^{aU}  \dfrac{1}{\xi^{m+2}} | D(\xi)|^m d\xi + \dfrac{T^{m-1}}{(\log T)^{m-1}} = I_D' + \dfrac{T^{m-1}}{(\log T)^{m-1}}.
			\end{multline*}
			Integration by parts gives
			\begin{multline*}
				I_D' =  \int\limits_1^{aU}  \dfrac{1}{\xi^{m+2}}  d \left( \int\limits_1^{\xi} |D(t)|^m dt \right)  =\\
				\dfrac{1}{(aU)^{m+2}} \int\limits_1^{aU}  |D(t)|^m dt  + (m+2)  \int\limits_1^{aU} \left( \int\limits_1^\xi |D(t)|^m dt \right) \dfrac{d\xi}{\xi^{m+3}}.
			\end{multline*}
			From the inequality $m \geqslant 3$ and the estimate \eqref{ID}, we obtain
			\[
			I_D \ll_m \dfrac{1}{U^{m+2}} \dfrac{T^{m-1}U^{2m}}{(\log T)^{m-1}} + \int\limits_1^{aU} \dfrac{T^{m-1}\xi^{2m}}{(\log T)^{m-1}} \dfrac{d\xi}{\xi^{m+3}} \ll \dfrac{T^{m-1}}{(\log T)^{m-1}} U^{m-2} \ll \dfrac{T^{m-1}}{\log T}.
			\]
			Thus, we get
			\[
			I_D \ll_m \dfrac{T^{m-1}}{\log T}.
			\]
			
			3) The case $F = M$. We need to estimate the integral
			\[
			I_M = \int\limits_0^U |\hat{h}(\xi)| \cdot |M(a\xi)|^m d\xi \ll \int\limits_0^{aU} |M(\xi)|^m d\xi.
			\]
			We have
			\[
			I_M  \ll \dfrac{1}{(2\pi)^m}\int \limits_0^{aU} \dfrac{\Lambda(n(x))^m}{n(x)^{\frac{m}{2}}} \left| \dfrac{e^{iT \log \frac{x}{n(x)}}-1}{i \log \frac{x}{n(x)}}\right|^m d\xi.
			\]
			We split the integration interval into subintervals of the form
			\[
			\Delta_n = \left[\dfrac{1}{2\pi} \log \left(n-\dfrac{1}{2} \right), \dfrac{1}{2\pi} \log \left(n+\dfrac{1}{2} \right)  \right)  = [\alpha_n, \beta_n).
			\]
			Then, since $\xi \in \Delta_n\ (1 \leqslant n \leqslant e^{2\pi U a} + 1)$ holds if and only if $n(x) = n$, we obtain
			\begin{multline*}
				I_M \ll \dfrac{1}{(2\pi)^m} \sum\limits_{2 \leqslant n \leqslant e^{2\pi Ua} + 1}	\dfrac{\Lambda^m(n)}{n^{\frac{m}{2}}} \int\limits_{\alpha_n}^{\beta_n} \left|\dfrac{e^{iT\log \frac{x}{n}}-1}{i\log \frac{x}{n}} \right|^m d\xi\ll \\
				\dfrac{1}{(2\pi)^m} \sum\limits_{2 \leqslant n \leqslant e^{2\pi Ua} + 1}	\dfrac{\Lambda^m(n)}{n^{\frac{m}{2}}} \int\limits_{\log \left( 1-\frac{1}{2n}\right) }^{\log \left( 1+\frac{1}{2n}\right)} \left| \dfrac{e^{iTu}-1}{iu} \right|^m du.
			\end{multline*}
			Since 
			\[
			\left| \dfrac{e^{iTu}-1}{iu} \right| \leqslant \dfrac{2}{|u|}
			\]
			and
			\[
			\left| \dfrac{e^{iTu}-1}{iu} \right| = \left|\int\limits_0^T e^{iu \xi} d\xi \right| \leqslant T,
			\]
			it follows that
			\[
			I_M \ll \sum\limits_{2 \leqslant n \leqslant e^{2\pi Ua} + 1} \dfrac{\Lambda^m(n)}{n^{\frac{m}{2}}} \int\limits_{\log \left( 1-\frac{1}{2n}\right) }^{\log \left( 1+\frac{1}{2n}\right)} \min \left(T, \dfrac{1}{|u|} \right) ^m du.
			\]
			Using the estimate
			\begin{multline*}
				\int\limits_{\log \left( 1-\frac{1}{2n}\right) }^{\log \left( 1+\frac{1}{2n}\right)} {\min}^m \left(T, \dfrac{1}{|u|} \right) du =\\
				\left(\int\limits_{-\frac{1}{T}}^{\frac{1}{T}} + \int\limits_{|u|> \frac{1}{T}}\right)  {\min}^m \left(T, \dfrac{1}{|u|} \right) du \ll T^{m-1} + \int\limits_{\frac{1}{T}}^{+\infty} \dfrac{du}{u^m} \ll T^{m-1},
			\end{multline*}
			we get
			\[
			I_M \ll T^{m-1} \sum\limits_{n \geqslant 2} \dfrac{(\log n)^m}{n^{\frac{m}{2}}} \ll_m T^{m-1}.
			\]
			Since at least one of the functions $F_k$, $1 \leqslant k \leqslant m$, is different from $M$, we have
			\[
			I \ll_m T^{\frac{(m-1)^2}{m}} \left(\dfrac{T^{m-1}}{\log T} \right)^{\frac{1}{m}} = \dfrac{T^{m-1}}{(\log T)^{\frac{1}{m}}}.
			\]
			Hence,
			\[
			R_1 \ll_m  \dfrac{T^{m-1}}{(\log T)^{\frac{1}{m}}}
			\]
			and
			\begin{equation}\label{H1}
				H_1 = 2 \Re \int\limits_0^U \hat{h}(\xi) \prod\limits_{t=1}^q M(b_t \xi)  \prod\limits_{s=1}^\ell \overline{M}(c_s \xi) d \xi + O_m \left( \dfrac{T^{m-1}}{(\log T)^{\frac{1}{m}}} \right).
			\end{equation}
			
			Let $d_1, d_2, \ldots, d_s$ be all distinct numbers from the tuple $(b_1, b_2, \ldots, b_q)$ with multiplicities $r_1, r_2, \ldots, r_s$, respectively. Similarly, let $-e_1, -e_2, \ldots, -e_v$ be the distinct numbers from the tuple $(-c_1, -c_2, \ldots, -c_\ell)$ with corresponding multiplicities $t_1, t_2, \ldots, t_v$. Then
			$$r_1 + \cdots + r_s = q,\ t_1 + \cdots + t_v = \ell,$$
			$$ \ r_\nu, t_\lambda \geqslant 1\ ( 1\leqslant \nu\leqslant s, 1\leqslant \lambda\leqslant v).$$ Since the numbers $d_1, \ldots, d_s$ and $-e_1, \ldots, -e_v$ are pairwise distinct, we obtain that, by assumption, they are pairwise coprime.  
			
			Let us denote the integral in \eqref{H1} by $J$. Then, setting $n = n(x)$, we have
			\begin{multline*}
				J = \sum\limits_{n \leqslant e^{2\pi U} + \frac{1}{2}} \int \limits_{\alpha_n^{''}}^{\beta_n^{''}} \hat{h} (\xi) \prod\limits_{\nu=1}^s (M(d_\nu \xi))^{r_\nu} \prod\limits_{\lambda=1}^v (\overline{M}(e_\lambda\xi))^{t_\lambda} d\xi = \\ \sum\limits_{n \leqslant e^{2\pi U} + \frac{1}{2}} \int \limits_{\alpha_n^{''}}^{\beta_n^{''}} \hat{h} (\xi) \prod\limits_{\nu=1}^s  \left( - \dfrac{\Lambda (n(x^{d_\nu}))}{2\pi \sqrt{n(x^{d_\nu})}} \cdot \dfrac{e^{i T \log \frac{x^{d_\nu}}{n(x^{d_\nu})}}-1}{i \log \frac{x^{d_\nu}}{n(x^{d_\nu})}}  \right)^{r_\nu} \times\\
				\prod\limits_{\lambda=1}^v \left(- \dfrac{\Lambda (n(x^{e_\lambda}))}{2\pi\sqrt{n(x^{e_\lambda})}} \cdot \dfrac{1-e^{-iT \log \frac{x^{e_\lambda}}{n(x^{e_\lambda})}}}{i \log \frac{x^{e_\lambda}}{n(x^{e_\lambda})}} \right)^{t_\lambda}d\xi,
			\end{multline*}
			where 
			\[
			\alpha_n^{''} = \max \left(0, \dfrac{1}{2\pi} \log \left(n-\dfrac{1}{2} \right)  \right),
			\]
			\[
			\beta_n^{''} = \min \left(U, \dfrac{1}{2\pi} \log \left(n+\dfrac{1}{2} \right)  \right).
			\] 
			Let $N_\nu = n(x^{d_\nu})$ for $1 \leqslant \nu \leqslant s$ and $M_\lambda = n(x^{e_\lambda})$ for $1 \leqslant \lambda \leqslant v$. Then from the inequalities
			\[
			n- \dfrac{1}{2} \leqslant x < n + \dfrac{1}{2},
			\]
			\[
			N_\nu  - \dfrac{1}{2} \leqslant x^{d_\nu} < N_\nu + \dfrac{1}{2},\ \ M_\lambda  - \dfrac{1}{2} \leqslant x^{e_\lambda} < M_\lambda + \dfrac{1}{2}
			\]
			we obtain
			\begin{equation}\label{N_nu}
				\left( n- \dfrac{1}{2}\right)^{d_\nu} -\dfrac{1}{2}<  N_\nu < \left( n+\dfrac{1}{2}\right)^{d_\nu} +\dfrac{1}{2},
			\end{equation}
			\begin{equation}\label{M_lambda}
				\left( n- \dfrac{1}{2}\right)^{e_\lambda}-\dfrac{1}{2} <  M_\lambda < \left( n + \dfrac{1}{2}\right)^{e_\lambda}+\dfrac{1}{2}. 
			\end{equation}
			Therefore,
			\[
			J = \sum\limits_{n \leqslant e^{2\pi U}+ \frac{1}{2}} \mathop{{\sum}'}\limits_{\substack{N_1, \ldots, N_s \\ M_1, \ldots, M_v}} \prod\limits_{\nu=1}^s \left( - \dfrac{\Lambda (N_\nu)}{2\pi \sqrt{N_\nu}}\right)^{r_\nu} \prod\limits_{\lambda=1}^v \left(- \dfrac{\Lambda(M_\lambda)}{2\pi \sqrt{M_\lambda}} \right)^{t_\lambda}  j(n),
			\]
			where the prime indicates summation under the conditions \eqref{N_nu} and \eqref{M_lambda}, and
			\[
			j(n) = j(n, N_1, \ldots, M_v) =  \int \limits_{\alpha_n^{'}}^{\beta_n^{'}} \hat{h}(\xi) \prod\limits_{\nu=1}^s \left(\dfrac{e^{iT \log \frac{x^{d_\nu}}{N_\nu}}-1}{i \log \frac{x^{d_\nu}}{N_\nu}} \right)^{r_\nu}  \prod\limits_{\lambda=1}^v \left( \dfrac{1- e^{-iT\log \frac{x^{e_\lambda}}{M_\lambda}}}{i \log \frac{x^{e_\lambda}}{M_\lambda}}\right)^{t_\lambda} d\xi,
			\]
			\[
			\alpha_n^{'} = \max \left(0, \dfrac{1}{2\pi} \log \left( n -\dfrac{1}{2}\right), \ldots, \dfrac{1}{2\pi d_\nu} \log\left( N_\nu- \dfrac{1}{2}\right), \ldots,   \dfrac{1}{2\pi e_\lambda}\log \left( M_\lambda- \dfrac{1}{2}\right), \ldots  \right),
			\]
			\[
			\beta_n^{'} = \min \left(U, \dfrac{1}{2\pi} \log \left( n +\dfrac{1}{2}\right), \ldots, \dfrac{1}{2\pi d_\nu} \log\left( N_\nu+ \dfrac{1}{2}\right), \ldots,   \dfrac{1}{2\pi e_\lambda} \log\left( M_\lambda + \dfrac{1}{2}\right), \ldots  \right).
			\]
			Making the substitution $\xi = \dfrac{u + \log n}{2\pi}$ in the integral $j(n)$, we obtain
			\[
			j(n) = \dfrac{1}{2\pi}  \int \limits_{\alpha_n}^{\beta_n} \hat{h}\left(\dfrac{u+\log n}{2\pi} \right) \prod\limits_{\nu=1}^s \left(  \dfrac{e^{iT(d_\nu u + \log \frac{n^{d_\nu}}{N_\nu})}-1}{i(d_\nu u + \log \frac{n^{d_\nu}}{N_\nu})}\right)^{r_\nu}  \prod\limits_{\lambda=1}^v \left(\dfrac{1-e^{-iT(e_\lambda u + \log \frac{n^{e_\lambda}}{M_\lambda})}}{i (e_\lambda u + \log \frac{n^{e_\lambda}}{M_\lambda})} \right)^{t_\lambda}du, 
			\]
			where $\alpha_n = 2\pi\alpha_n'-\log n$ and $\beta_n = 2\pi\beta_n'-\log n.$
			
			We now show that the contribution of those $N_\nu$ and $M_\lambda$ for which $n^{d_\nu} \ne N_\nu$ and $n^{e_\lambda} \ne M_\lambda$ is negligible. Without loss of generality, let $n^{d_1} \ne N_1$. Clearly, this is possible only if $d_1 \geqslant 2$. Indeed, for $d_1 = 1$ we have
			\[
			N_1 - \dfrac{1}{2} < x^{d_1} < N_1 + \dfrac{1}{2},
			\]
			\[
			n - \dfrac{1}{2} < x  < n + \dfrac{1}{2}.
			\]
			Hence $N_1 = n = n^{d_1}$, and we obtain a contradiction.
			
			We split the integral $j(n)$ into two parts:
			$$
			j(n) = j_n^{(1)} + j_{n,2},
			$$
			where $j_n^{(1)}$ includes exactly those $u$ from the integral $j(n)$ for which
			\begin{equation}\label{3stars}
				\left| d_1 u + \log \dfrac{n^{d_1}}{N_1}\right| \leqslant \dfrac{1}{\sqrt{T}}.
			\end{equation}
			We estimate the integral $j_n^{(1)}$. From \eqref{3stars} we obtain
			\begin{equation}\label{5stars}
				u = \dfrac{1}{d_1} \log \dfrac{N_1}{n^{d_1}} + \dfrac{\theta}{d_1\sqrt{T}} \ \ (|\theta| \leqslant 1).
			\end{equation}
			From this, since $N_1 \ne n^{d_1}$ and $n^{d_1}	 = o (\sqrt{T})$, we obtain
			\begin{multline*}
				|u| \geqslant \left| \dfrac{1}{d_1} \log \dfrac{N_1}{n^{d_1}}\right|  - \dfrac{1}{d_1\sqrt{T}} \geqslant\\
				\dfrac{1}{d_1} \left|\log \left(1 + \dfrac{1}{\min(n^{d_1}, N_1)}  \right)  \right| - \dfrac{1}{d_1\sqrt{T}} \gg \dfrac{1}{n^{d_1}} \gg \dfrac{1}{n^{O_{\textbf{\textit{a}}}(1)}}.
			\end{multline*}
			Using the inequalities
			\[
			\left|\dfrac{e^{iTw}-1}{iw} \right| \ll \min \left(T, \dfrac{1}{|w|} \right)
			\]
			and $|\hat{h}(\xi)| \ll 1$, we obtain
			\[
			j_n^{(1)} \ll \int\limits_{|\xi_1(u)| \leqslant {1}/{\sqrt{T}}} T^{r_1} \prod \limits_{\nu=2}^s \left( \min \left(T, \dfrac{1}{|\xi_\nu(u)|} \right)  \right)^{r_\nu}  \prod\limits_{\lambda=1}^v \left( \min \left(T, \dfrac{1}{|\eta_\lambda(u)|} \right)  \right)^{t_\lambda} du,
			\]	
			where 
			\[
			\xi_\nu(u) = d_\nu u + \log \dfrac{n^{d_\nu}}{N_\nu} \ (1 \leqslant \nu \leqslant s),
			\]	
			\[
			\eta_\lambda(u) = e_\lambda  u + \log \dfrac{n^{e_\lambda}}{M_\lambda} \ (1 \leqslant \lambda \leqslant v).
			\]
			
			Note that for any $2 \leqslant \nu \leqslant s$ we have $N_1^{d_\nu} \ne N_\nu^{d_1}$. Indeed, if $N_1^{d_\nu} = N_\nu^{d_1}$ for some $\nu \geqslant 2$, then, since $(d_\nu, d_1) = 1$, it follows that $N_1 = k^{d_1}$ for some $k \geqslant 1$. From this, in view of the inequality
			$$
			-\dfrac{1}{2} + \left( n - \dfrac{1}{2} \right)^{d_1} < N_1 < \left( n + \dfrac{1}{2} \right)^{d_1} + \dfrac{1}{2},
			$$
			which is impossible for $k \leqslant n - 1$ and $k \geqslant n + 1$, we conclude that $k = n$ and $N_1 = n^{d_1}$, which contradicts the assumption. Similarly, for any $1 \leqslant \lambda \leqslant v$ we have $N_1^{e_\lambda} \ne M_\lambda^{d_1}$.
			
			Now take an arbitrary $2 \leqslant \nu \leqslant s$ and consider the expression
			\[
			\xi_\nu (u) = d_\nu u + \log \dfrac{n^{d_\nu}}{N_\nu}.
			\]
			By virtue of formula \eqref{5stars} we have
			\[
			\xi_\nu(u) = \dfrac{d_\nu}{d_1} \log \dfrac{N_1}{n^{d_1}} + \log \dfrac{n^{d_\nu}}{N_\nu} + \dfrac{d_\nu}{d_1} \dfrac{\theta}{\sqrt{T}}.
			\]
			Since $N_1^{d_\nu} \ne N_\nu^{d_1}$, we have either $N_1^{d_\nu} \geqslant N_\nu^{d_1} + 1$ or $N_\nu^{d_1} \geqslant N_1^{d_\nu} + 1$.
			
			Suppose the first inequality holds. Then
			\[
			N_1^{\frac{1}{d_1}} \geqslant N_\nu^{\frac{1}{d_\nu}}\left( 1 + \dfrac{1}{N_\nu^{d_1}}\right) ^{\frac{1}{d_1d_\nu}}.
			\]
			Hence,
			\[
			\log \dfrac{N_1^{\frac{1}{d_1}} }{n} \geqslant \log \dfrac{N_\nu^{\frac{1}{d_\nu}} }{n} + \dfrac{1}{d_1d_\nu} \log \left(1 + \dfrac{1}{N_\nu^{d_1}} \right), 
			\]	
			\[
			\dfrac{1}{d_1} \log \dfrac{N_1}{n^{d_1}} \geqslant \dfrac{1}{d_\nu} \log \dfrac{N_\nu}{n^{d_\nu}} + \dfrac{1}{d_1 d_\nu} \log \left(1 + \dfrac{1}{N_\nu^{d_1}} \right) 
			\]
			and
			\[
			\dfrac{d_\nu}{d_1} \log \dfrac{N_1}{n^{d_1}} + \log \dfrac{n^{d_\nu}}{N_\nu} \geqslant \dfrac{1}{d_1} \log \left(1 + \dfrac{1}{N_\nu^{d_1}} \right) > 0. 
			\]	
			Therefore,
			\[
			|\xi_\nu(u)| \geqslant \dfrac{1}{d_1} \log \left(1 + \dfrac{1}{N_\nu^{d_1}} \right) - \dfrac{d_\nu}{d_1} \dfrac{1}{\sqrt{T}} \gg \dfrac{1}{N_\nu^{d_1}} \gg \dfrac{1}{n^{O_{\textbf{\textit{a}}}(1)}}.
			\]
			If the second inequality holds, then
			\[
			\log \dfrac{N_\nu}{n^{d_\nu}} - \dfrac{d_\nu}{d_1} \log \dfrac{N_1}{n^{d_1}} \geqslant  \dfrac{1}{d_1} \log \left(1 + \dfrac{1}{N_\nu^{d_1}} \right) > 0
			\]
			and 
			\[
			|\xi_\nu(u) | \geqslant  \dfrac{1}{d_1} \log \left(1 +\dfrac{1}{N_\nu^{d_1}} \right) - \dfrac{d_\nu}{d_1\sqrt{T}} \gg \dfrac{1}{n^{O_{\textbf{\textit{a}}}(1)}}.
			\]	
			Similarly, we obtain the inequalities
			\[
			|\eta_\lambda (u)| \gg \dfrac{1}{n^{O_{\textbf{\textit{a}}}(1)}} \ (1 \leqslant \lambda \leqslant v).
			\]
			Thus, since $r_1 \leqslant m - 1$, we obtain
			\[
			j_n^{(1)} \ll \int \limits_{|\xi_1(u)| \leqslant \frac{1}{\sqrt{T}}} T^{r_1}  n^{O_{\textbf{\textit{a}}}(1)} du \ll T^{r_1-\frac{1}{2}} n^{O_{\textbf{\textit{a}}}(1)} \ll T^{m-\frac{3}{2}} n^{O_{\textbf{\textit{a}}}(1)}.
			\]
			Since the number of possible choices for $N_1, N_2, \ldots, M_\nu$ does not exceed $n^{O_{\textbf{\textit{a}}}(1)}$, then, taking into account the trivial estimate $\Lambda(n) \leqslant \log n$, the contribution of the integral $j_n^{(1)}$ to $J$ does not exceed
			\[
			T^{m-\frac{3}{2}} \sum\limits_{n \leqslant e^{2\pi U} + \frac{1}{2}}  n^{O_{\textbf{\textit{a}}}(1)} \ll_{\varepsilon, \textbf{\textit{a}}} T^{m-\frac{3}{2} + \varepsilon}
			\]
			for sufficiently small $\varepsilon > 0$.
			
			We repeat the above procedure for the integral $j_{n,2}$. Namely, for the next $2 \leqslant \nu \leqslant s$ such that $N_\nu \ne n^{d_\nu}$, we split $j_{n,2}$ into two integrals: in the first, the inequality $|\xi_\nu(u)| \leqslant {1}/{\sqrt{T}}$ holds, and in the second, the opposite inequality holds. In this way, after $g = O_{\textbf{\textit{a}}}(1)$ steps, we obtain
			\[
			j(n) = j_n^{(1)} + j_n^{(2)} + \ldots + j_n^{(g)} + j_{n, g+1},
			\]
			where $j_n^{(1)}, j_n^{(2)}, \ldots, j_n^{(g)}$ contribute to $J$ no more than $O_{\textbf{\textit{a}}, \varepsilon} \left(T^{m-\frac{3}{2} +\varepsilon}  \right) $, and
			\[
			j_{n, g+1} \ll \int\limits_{\mathcal{G}} \prod \limits_{\nu=1}^s \left|\dfrac{e^{iT\xi_{\nu} (u)}-1}{i\xi_\nu(u)} \right|^{r_\nu}  \prod \limits_{\lambda=1}^v\left| \dfrac{1-e^{-iT\eta_\lambda(u)}}{i\eta_\lambda(u)} \right|^{t_\lambda} du,
			\]
			where the domain $\mathcal{G}$ is defined by the inequalities
			\begin{itemize}
				\item $\log \left(1 -\dfrac{1}{2n} \right)  \leqslant u \leqslant \log \left(1+\dfrac{1}{2n} \right)$;
				
				\item $|\xi_\nu(u)| > \dfrac{1}{\sqrt{T}}$ for each $1 \leqslant \nu \leqslant s$ such that $N_\nu \ne n^{d_\nu}$;
				
				\item $|\eta_\lambda(u)| > \dfrac{1}{\sqrt{T}}$ for each $1 \leqslant \lambda \leqslant v$ such that $M_\lambda \ne n^{e_\lambda}$.
			\end{itemize}
			Let
			\[
			P = \sum\limits_{\substack{1 \leqslant \nu \leqslant s \\ N_\nu \ne n^{d_\nu}}} r_\nu + \sum\limits_{\substack{1 \leqslant \lambda \leqslant v \\ M_\lambda \ne n^{e_\lambda}}} t_\lambda.
			\]
			Then $P \geqslant 1$ and 
			\begin{multline*}
				j_{n, g+1} \ll (\sqrt{T})^P \int\limits_{\mathcal{G}} \prod \limits_{\substack{1 \leqslant \nu \leqslant s \\ N_\nu = n^{d_\nu}}} \left| \dfrac{e^{iTd_\nu u}-1}{i d_\nu u} \right|^{r_\nu}  \prod\limits_{\substack{1 \leqslant \lambda \leqslant v \\ M_\lambda = n^{e_\lambda}}} \left| \dfrac{1- e^{-i Te_\lambda u}}{i e_\lambda u} \right|^{t_\lambda}du \ll \\
				T^{\frac{P}{2}} \int\limits_{\log\left(1-\frac{1}{2n} \right)}^{\log\left(1+\frac{1}{2n} \right)}\prod \limits_{\substack{1 \leqslant \nu \leqslant s \\ N_\nu = n^{d_\nu}}}  \left.\min\right.^{r_\nu} \left(T, \dfrac{1}{|d_\nu u|} \right)  \prod\limits_{\substack{1 \leqslant \lambda \leqslant v \\ M_\lambda = n^{e_\lambda}}} \left.\min\right.^{t_\lambda} \left(T, \dfrac{1}{|e_\lambda u|} \right) du \ll \\
				T^{\frac{P}{2}}  \int\limits_0^{+\infty} \left.\min\right.^{m-P} \left(T, \dfrac{1}{u} \right) du \ll T^{\frac{P}{2}} \left( T^{m-P} \int\limits_0^{\frac{1}{T}} du +\int\limits_{\frac{1}{T}}^{+\infty} \dfrac{du}{u^{m-P}} \right) \ll\\
				T^{m-\frac{P}{2}-1} \ll T^{m-\frac{3}{2}}.
			\end{multline*} 
			Hence the contribution of $j_{n, g+1}$ to $J$ will also not exceed  
			$O_{\mathbf{\textit{a}}, \varepsilon} \left( T^{\,m-\frac{3}{2}+\varepsilon} \right)$.  
			Thus,
			\begin{multline*}
				J = \dfrac{1}{2\pi} \sum\limits_{n \leqslant e^{2\pi U} + \frac{1}{2}} \prod \limits_{\nu=1}^s \left(- \dfrac{\Lambda(n^{d_\nu})}{2\pi \sqrt{n^{d_\nu}}} \right)^{r_\nu}  \prod \limits_{\lambda=1}^v \left( -\dfrac{\Lambda(n^{e_\lambda})}{2\pi \sqrt{n^{e_\lambda}}} \right)^{t_\lambda}  j(n) + O_{\textbf{\textit{a}}, \varepsilon} \left( T^{m-\frac{3}{2}+\varepsilon} \right) = \\
				\dfrac{(-1)^m}{(2\pi)^{m+1}} \sum\limits_{n \leqslant e^{2\pi U} + \frac{1}{2}} \dfrac{\Lambda^m(n)}{n^S} j(n) + O_{\textbf{\textit{a}}, \varepsilon} \left( T^{m-\frac{3}{2}+\varepsilon} \right),
			\end{multline*} 
			where \begin{multline*}
				S = \dfrac{r_1d_1 + \cdots + r_sd_s + e_1t_1 + \cdots + e_vt_v}{2} =\\
				\dfrac{b_1 + \cdots + b_q + c_1 + \cdots + c_\ell}{2} = b_1 + \cdots + b_q
			\end{multline*}
			and
			\[
			j(n) = \int\limits_{\alpha_n}^{\beta_n} \hat{h}\left( \dfrac{u +  \log n}{2\pi}\right) \prod\limits_{\nu=1}^s \left(\dfrac{e^{iTd_\nu u}-1}{i d_\nu u} \right)^{r_\nu} \prod \limits_{\lambda=1}^v \left(\dfrac{1-e^{-iTe_\lambda u}}{ie_\lambda u} \right)^{t_\lambda} du.
			\]
			Replace\ $\hat{h}\!\left(\dfrac{u+\log n}{2\pi}\right)$\ with\ $\hat{h}\!\left(\dfrac{\log n}{2\pi}\right)$.
			Since \[
			\hat{h}\left( \dfrac{u +  \log n}{2\pi}\right) - \hat{h}\left( \dfrac{ \log n}{2\pi}\right) = \int\limits_{\frac{\log n}{2\pi}}^{\frac{u+\log n}{2\pi}} \hat{h'}(\xi) d\xi \ll |u|,
			\]
			the error introduced by this substitution is at most
			\begin{multline*}
				\int\limits_{\log\left(1-\frac{1}{2n} \right) }^{\log\left(1+\frac{1}{2n} \right) } |u|  \prod\limits_{\nu=1}^s \left.\min\right.^{r_\nu} \left(T, \dfrac{1}{d_\nu |u|} \right)  \prod \limits_{\lambda=1}^v  \left.\min\right.^{t_\lambda} \left(T, \dfrac{1}{e_\lambda |u|} \right) \ll \\
				\int\limits_{-\infty}^{+\infty} |u| \left.\min\right.^{m}	\left(T, \dfrac{1}{|u|} \right) du \ll T^m \int\limits_0^{\frac{1}{T}} u du + \int\limits_{\frac{1}{T}}^{+\infty} \dfrac{du}{u^{m-1}} \ll T^{m-2}.
			\end{multline*} 	
			Hence,
			\begin{equation}\label{6stars}
				J = \dfrac{(-1)^m}{(2\pi)^{m+1}}\sum\limits_{n \leqslant e^{2\pi U} + \frac{1}{2}}  \dfrac{\Lambda^m(n)}{n^S} \hat{h} \left( \dfrac{\log n}{2\pi} \right) j'(n) + O_{\textbf{\textit{a}}, \varepsilon}\left( T^{m-\frac{3}{2}+\varepsilon} \right),
			\end{equation}
			where
			\[
			j'(n) = \int\limits_{\alpha_n}^{\beta_n} \prod\limits_{\nu=1}^s \left(  \dfrac{e^{iTd_\nu u}-1}{i d_\nu u}\right)^{r_\nu} \prod \limits_{\lambda=1}^v \left( \dfrac{1-e^{-iTe_\lambda u}}{i e_\lambda u}\right)^{t_\lambda} du.
			\]
			Extend the region of integration to the entire real line. Since
			$$
			\int\limits_{-\infty}^{\alpha_n}\frac{du}{u^m}+\int\limits_{\beta_n}^{\infty}\frac{du}{u^m}\ll n^{O_{\mathbf{\it a}}(1)},
			$$
			the error introduced by this extension is absorbed into the $O$-term in \eqref{6stars}.
			
			Since
			\begin{multline*}
				\int\limits_{-\infty}^{+\infty} \prod\limits_{\nu=1}^s  \left(  \dfrac{e^{iTd_\nu u}-1}{i d_\nu u}\right)^{r_\nu} \prod \limits_{\lambda=1}^v \left( \dfrac{1-e^{-iTe_\lambda u}}{i e_\lambda u}\right)^{t_\lambda} du = \\
				2^m \int\limits_{-\infty}^{+\infty} \prod\limits_{\nu=1}^s \left(\dfrac{\sin \left(\frac{Td_\nu u}{2}\right) }{d_\nu u}\right)^{r_\nu} \prod\limits_{\lambda=1}^v \left(\dfrac{\sin \left(\frac{Te_\lambda u}{2}\right) }{e_\lambda u}\right)^{e_\lambda} du = \\
				{2T^{m-1}}\int\limits_{-\infty}^{+\infty} \prod\limits_{\nu=1}^s \left( \dfrac{\sin (w  d_\nu) }{w d_\nu}\right)^{r_\nu} \prod \limits_{\lambda=1}^v \left( \dfrac{\sin (w  e_\lambda) }{w e_\lambda}\right)^{t_\lambda} dw = \\
				{2T^{m-1}}\int\limits_{-\infty}^{+\infty} \prod\limits_{k=1}^m \dfrac{\sin (w |a_k|)}{w |a_k|} dw,
			\end{multline*}
			we get
			\[
			J = D(m)T^{m-1} \sum\limits_{n \leqslant e^{2\pi U} + \frac{1}{2}}\dfrac{\Lambda^m(n )}{n^S} \hat{h}\left( \dfrac{\log n}{2\pi}\right) + O_{\textbf{\textit{a}}, \varepsilon} \left( T^{m-\frac{3}{2}+\varepsilon}\right),
			\]
			where the value $D(m)$ is defined in the assumption.
			
			Extend the summation over $n$ to infinity. Since $m\geqslant 3$, we have $S\geqslant {3}/{2}$, and hence $S\geqslant 2$, it follows that
			\[
			\sum\limits_{n > e^{2\pi U} + \frac{1}{2}} \dfrac{\Lambda^m(n )}{n^S} \hat{h}\left( \dfrac{\log n}{2\pi}\right) \ll \sum\limits_{n \geqslant e^{2\pi U} + \frac{1}{2}} \dfrac{(\log n)^m}{n^{2}} \ll_m \dfrac{U^m}{e^{2\pi U}} \ll \dfrac{1}{\log T}.
			\]
			Therefore,
			\[
			J = D(m) T^{m-1} \sum\limits_{n=1}^{+\infty} \dfrac{\Lambda^m(n )}{n^S} \hat{h}\left( \dfrac{\log n}{2\pi}\right) + O_m \left( \dfrac{T^{m-1}}{\log T}\right).
			\]
			Using the equality
			\[
			\hat{h}\left( \dfrac{\log n}{2\pi}\right) = \int\limits_{-\infty}^{+\infty} h(t) n^{-it} dt
			\]
			and using the uniform convergence of the integral and the series, we obtain
			\[
			J = D(m) T^{m-1} \int\limits_{-\infty}^{+\infty} h(t) \sum\limits_{n=1}^{+\infty} \dfrac{\Lambda^m(n)}{n^{S+it}} dt + O_m \left( \dfrac{T^{m-1}}{\log T} \right).
			\]
			Taking into account all the estimates above, we obtain
			\begin{multline*}
				H = 2 \Re D(m) T^{m-1} \int\limits_{-\infty}^{+\infty} h(t) \sum\limits_{n=1}^{+\infty} \dfrac{\Lambda^m(n)}{n^{S+it}} dt + O_{m, \textbf{\textit{a}}} \left(\dfrac{T^{m-1}}{(\log T)^{\frac{1}{m}}} \right) =\\
				D(m) T^{m-1} \int\limits_{-\infty}^{+\infty} h(t) \left( \mathcal{K}_m(S+it) + \mathcal{K}_m(S-it)\right)  dt + O_{m, \textbf{\textit{a}}} \left(\dfrac{T^{m-1}}{(\log T)^{\frac{1}{m}}} \right),
			\end{multline*}
			where $$ \mathcal{K}_m(s) = \sum\limits_{n=1}^{+\infty} \dfrac{\Lambda^m(n)}{n^s}.$$
			Theorem \ref{Th1} is proved.


\begin{thebibliography}{99}
				
				
				
				\bibitem{Ford-Zahar2015} Ford K., Zaharescu A., (2015). \textit{Unnormalized differences between zeros of L-functions}// Compositio Mathematica, 151(2), pp. 230-252.
				\bibitem{Iud2023} Iudelevich E. D., (2024). \textit{On a Linear Form in the Ordinates of Zeros of the Riemann Zeta Function}// Math. Notes, 115:1, 114–130.
				
				\bibitem{Voronin-Karatsuba} Karatsuba A. A., Voronin S. M., (1992). \textit{The Riemann Zeta-function}, Walter de Gruyter.
				
				\bibitem{Marco-Perez2011} P\'erez-Marco R., (2011). \textit{Statistics on Riemann zeros}// \texttt{arXiv}:1112.0346.
				
				
				
				\bibitem{Selberg1946} Selberg A., (1946). \textit{Contributions to the theory of the Riemann zeta-function}// Arch. Math. Naturvid.
				48(5), pp. 89–155.
				\bibitem{Erd-Bate} Bateman H., Erd\'elyi A., (1954). \textit{Tables of Integral Transforms}, Ed. McGraw-Hill, New York~--~London, 391 pp.
			\end{thebibliography}
		\end{document}